\numberwithin{equation}{section}
\newtheorem{theorem}{Theorem}[section]
\newtheorem{lemma}[theorem]{Lemma}
\newtheorem{proposition}[theorem]{Proposition}
\newtheorem{corollary}[theorem]{Corollary}
\newtheorem{conjecture}[theorem]{Conjecture}
\theoremstyle{definition}
\DeclareMathOperator{\Tr}{Tr}
\DeclareMathOperator{\End}{End}
\DeclareMathOperator{\wtr}{wt}
\newcommand{\qd}{\ensuremath{\partial}}
\newcommand{\zh}{\Theta}
\newcommand{\wtil}{\ensuremath{\tilde{\omega}}}
\newcommand{\der}[1]{\ensuremath{\partial_{#1}}}
\newcommand{\vv}{\mathbf{1}}
\newcommand{\tl}{\widetilde{\lambda}}
\begin{document}

\title{On the Torus Degeneration of the Genus Two Partition Function}
\author{
Donny Hurley\thanks{
Supported by the Science Foundation Ireland Frontiers of Research Programme } 
\ and Michael P. Tuite \\
School of Mathematics, Statistics and Applied Mathematics \\
National University of Ireland Galway, \\
University Road,
Galway, Ireland.}
\maketitle

\begin{abstract}
We consider the partition function of a general vertex operator algebra $V$ on a genus two Riemann surface formed by  sewing together two tori. We consider the non-trivial degeneration limit where one torus is pinched down to a Riemann sphere and show that the genus one partition function on the degenerate torus is recovered up to an explicit universal $V$-independent multiplicative factor raised to the power of the central charge.
\end{abstract}

\newpage

\section{Introduction}
\label{Intro}
A Vertex Operator Algebra (VOA) (e.g. \cite{FLM,K}) is an algebraic structure closely related to Conformal Field Theory (CFT) in physics e.g.~\cite{BPZ,DMS}.  There is a well developed mathematical theory of partition and $n$-point correlation functions for a VOA associated with a genus one torus \cite{Z}. More recently, a study has 
begun of VOA partition and correlation functions on a genus two Riemann surface formed from one or two sewn tori where the genus two partition and correlation functions are defined in terms of correlation function data on the genus one surface(s) \cite{T,MT1,MT2,MT3}.
In particular, in this paper we consider a genus two Riemann surface formed by sewing two tori and study the behavior of the genus two partition function for a general VOA $V$ in the non-trivial torus degeneration limit where one of the sewn tori is pinched down to a Riemann sphere. 
We show that the genus one partition function on the degenerate torus is recovered up to an explicit universal multiplicative factor raised to the power of the central charge. 
This universal factor is independent of the VOA $V$ and is determined explicitly in terms of the genus two partition function for the Heisenberg VOA.
This provides a non-trivial test on a more general conjecture concerning the independence of the genus two partition on the sewing scheme up to a general universal $V$-independent factor.

The paper begins in section~\ref{sec:VOAs} with a brief review of the theory of Vertex Operator Algebras (VOA) including Zhu theory for genus one partition and 1-point functions. In particular we describe a general expression for the genus one 1-point function for an arbitrary Virasoro descendent of the vacuum.
In section~\ref{sec:g2} we review the procedure for sewing two tori to obtain a genus two Riemann surface. We discuss the torus degeneration limit where one of the sewn tori is pinched down to a Riemann sphere. In particular, we describe the modular parameter $\tau$ on the degenerate surface (which is given by a limit of a diagonal component of the genus two period matrix) in terms of the sewing parameters.  
We then review the definition of the genus two partition function for a VOA and, in particular,  the explicit form of the partition function for the Heisenberg VOA and its modules since these expressions play a crucial role in our later considerations. 
Our main result is Theorem~\ref{g2_mainresult} wherein we determine the genus one degeneration limit of the genus two partition function of an arbitrary VOA $V$  with a genus two partition function. 
We show that the degeneration limit is described in terms of genus one 1-point correlation functions for a special set of Virasoro vacuum descendent vectors associated with the exponential conformal map of genus one Zhu theory. This result allows us to show that the genus one partition function on the degenerate torus is recovered up to an explicit $V$-independent  universal multiplicative factor raised to the power of the central charge. 
In the appendix we discuss some properties of the Virasoro vacuum descendent vectors in more detail.

\section{Vertex Operator Algebras}
\label{sec:VOAs}
\subsection{VOA Axioms} 
We  review some basic properties of Vertex Operator Algebras  e.g. \cite{FLM,FHL,K,LL,MN,MT4}.
A Vertex Operator Algebra (VOA) is a quadruple $(V, Y, \vv, \omega)$
consisting of a $\mathbb{Z}$-graded complex vector space $V =\bigoplus_{n\in\mathbb{Z}}V_n$ with $\dim V_{n}<\infty$, a linear map $Y \rightarrow \End V [z, z^{-1}]$, for formal parameter $z$ and a pair of states: the vacuum $\mathbf{1} \in V_0$ and a conformal vector $\omega \in V_2$. For each $u \in V$ we have a vertex operator
\begin{equation*}
Y(u,z) = \sum_{n\in\mathbb{Z}}u_n z^{-n-1},
\end{equation*}
with $u_n\in \End V$ which satisfies the following axioms: 
\begin{itemize}
\item \textbf{Locality.}
$(z_1-z_2)^N [Y(u,z_1),Y(v,z_2)] = 0$ for each $u,v\in V$ for some integer $N\gg 0$.
\item \textbf{Creativity.} 
$Y(u,z)\vv = u + O(z)$.
\item \textbf{Virasoro Structure.}
For the  conformal vector $\omega\in V_2$ 
\begin{displaymath}
Y(\omega,z) = \sum_{n\in\mathbb{Z}}L_{n} z^{-n-2},
\end{displaymath}
where the modes $L_{n}=\omega_{n+1}$ satisfy a Virasoro Algebra of central charge $C$ with bracket relation
\begin{align*}
[L_{m},L_{n}] =(m - n)L_{m+n} + 
C\frac{m^3 - m}{12}\operatorname{Id}_{V}\delta_{m,-n}.
\end{align*} 
Furthermore, the $\mathbb{Z}$-grading on $V$ is provided by $L_{0}$ with $L_{0}u=nu$ for all $u\in V_{n}$ where $n$ is referred to as the weight $\wtr(u)$ of $u$. 
\item \textbf{Translation.}
$[L_{-1}, Y(u, z)] = \partial_z Y(u, z)$.
\end{itemize}
\medskip

\noindent Amongst many things, these axioms imply that 
\begin{equation}
u_n:V_k\rightarrow V_{k-n+\wtr(u)-1},
\label{un}
\end{equation}
for $u$ of weight $\wtr(u)$. In particular,  define
\begin{equation}
o(u)=u_{\wtr(u)-1}:V_k\rightarrow V_{k},
\label{ou}
\end{equation}
which is extended by linearity to all $u$.
In this paper we consider VOAs of CFT-type for which $V_{0}=\mathbb{C}\vv$ (and consequently, $V_{n}=0$ for $n<0$ \cite{MT2}). 
\medskip

A bilinear form $\langle \ ,\rangle :V\times V{\longrightarrow }\mathbb{C}$
is called invariant if for all $u,a,b\in V$  \cite{FHL} 
\begin{equation*}
\langle Y(u,z)a,b\rangle =\langle a,Y^{\dagger }(u,z)b\rangle,
\end{equation*}%
with adjoint vertex operator defined by  
\begin{equation}
Y^{\dagger }(u,z)=\sum_{n\in\mathbb{Z}}u^{\dagger }_n z^{-n-1}
=Y\left(e^{z L_1}\left(-z^{-2}\right)^{L_0}u,z^{-1}\right).  \label{intro_adjop}
\end{equation}
We note that $L^{\dagger }_n=L_{-n}$,  
the bilinear form is  symmetric and 
$\langle a,b\rangle =0$ for homogeneous $a$ and $b$ with $\wtr(a)\neq \wtr(b)$ \cite{FHL}. 
A VOA $V$ of CFT-type has a unique non-zero invariant bilinear form up to a scalar \cite{Li} if $V$ is self-dual ($V$ is isomorphic to the dual module $V^{\prime}$ as a $V$-module \cite{FHL}). Furthermore, if $V$ is simple and we normalise so that $\langle \vv,\vv\rangle  = 1$ then such a form is unique and non-degenerate. We refer to this non-degenerate bilinear form as the Li-Zamolodchikov metric on $V$ \cite{MT2, MT3}.

\subsection{Modular Forms}
We review some aspects of the theory of modular functions  e.g.\ \cite{S}. Define the Eisenstein series for even $k\ge 2$ by
\begin{equation}
E_{k}(q)=-\frac{B_{k}}{k!}+\frac{2}{(k-1)!}\sum\limits_{n\geq 1}\frac{n^{k-1}q^{n}}{1-q^{n}},  \label{intro_Ekdefn}
\end{equation}%
where $B_k$ is the $k$th Bernoulli number defined by 
\begin{equation}
\label{bernoullidef}
\frac{z}{e^z-1}= \sum\limits_{k\geq 0} \frac{B_{k}}{k!}z^k = 1-\frac{1}{2}z+\frac{1}{12}z^2+ \hdots,
\end{equation}
and $E_{k}(q)=0$  for $k$ odd. For $q=e^{2\pi i\tau}$ with $\tau\in\mathbb{H}$, the complex upper half plane, $E_k(q)$ is a holomorphic modular form of weight $k$ on $SL(2, \mathbb{Z})$ for $k\ge 4$. $E_2(q)$ is a quasi-modular form of weight $2$ and the ring of quasi-modular forms is generated by $E_2(q),E_4(q)$ and $E_6(q)$ \cite{KZ}. 

Define the differential operator
\begin{equation*}
\qd \equiv q\frac{\partial}{\partial q}=\frac{1}{2\pi i }\frac{\partial}{\partial \tau}.
\end{equation*}
We recall that for a modular form $f_k(q)$ of weight $k$, the Serre modular derivative $(\qd +k E_2(q))f_k(q)$ is a modular form of weight $k+2$. Furthermore, 
\begin{equation}
\qd E_2(q)=5E_4(q)-E_2(q)^2. \label{delE2}
\end{equation} 
Thus, for a  quasi-modular form $f_k(q)$ of weight $k$ then  $\qd f_k(q)$ is a quasi-modular form of weight $k+2$.  
Finally, we define the Dedekind $\eta$-function
\begin{eqnarray}
\eta(q) &=& q^{1/24}\prod_{n \geq 1}(1-q^n),
\label{intro_dedekindef}
\end{eqnarray}
and note that 
\begin{equation}
\label{deleta}
\qd \eta(q)=-\frac{1}{2}E_2(q)\eta(q).
\end{equation} 
\medskip

\subsection{The Square Bracket VOA}\label{sec:g1_kqbracket}
In order to define genus one $n$-point correlation functions, 
Zhu \cite{Z} introduced the so-called square bracket formalism. 
This consists of a second VOA $(V,Y[\ ,\ ],\mathbf{1},\widetilde{\omega})$ isomorphic to $(V,Y(\ ,\ ),\mathbf{1},\omega )$ with vertex operators
\begin{displaymath}
Y[v,z] = \sum_{n\in\mathbb{Z}}v[n]z^{-n-1} = Y\left (e^{zL_{0}}v,e^z-1\right).
\end{displaymath}
The new conformal vector is $\wtil=\omega-\frac{C}{24}\vv$ with vertex operator
\begin{align*}
Y[\widetilde{\omega},z] &= \sum_{n\in\mathbb{Z}}L[n]z^{-n-2}.
\end{align*}
The isomorphism between $(V,Y(\ ,\ ),\mathbf{1},\omega )$ and the square bracket VOA $\left (V,Y[\ ,\ ],\mathbf{1},\tilde{\omega}\right)$
is described by the linear operator
\begin{equation*}
T =\exp\left (\sum_{i \ge 0} \alpha_i L_i\right),
\end{equation*}
constructed from the exponential map  
\begin{align*}
\phi(z)&=e^z-1=\exp\left(\sum_{i=1}^\infty \alpha_i z^{i+1}\der{z}\right)z,
\end{align*}
for specific rational parameters $ \alpha_1=\frac{1}{2}, \alpha_2=-\frac{1}{12}, \alpha_3=\frac{1}{48},\ldots$  \cite{Z}.
The bracket square vertex operator can be also expressed by  
\begin{equation}
Y[a,z] = T  Y\left (T ^{-1} a,z\right)T ^{-1}. \label{g2_zhushowsthat}
\end{equation}
For example, $\wtil = T \omega $ with $L[n]=T  L_n T ^{-1} $.  Thus $T  p=p$ for a 
primary vector $p$ (i.e. $L_k p=0$ for all $k>0$). 
For a general Virasoro descendent $v=L_{-k_1}\ldots L_{-k_k}p$, we find
\begin{displaymath}
T  v=L[-k_1]\ldots L[-k_k]p.
\end{displaymath}

$V$ has  decomposition $V =\bigoplus_{n\in\mathbb{Z}}V_{[n]}$ where $L[0]u=nu$ for $u\in V_{[n]}$ for square bracket weight $\wtr[u]=n$.
Similarly, we can define a square bracket Li-Zamolodchikov metric $\langle \ ,\rangle_{[\,]}$ where
\begin{equation*}
\left\langle Y[u,z]a,b\right\rangle_{[\,]} =\left\langle a,Y^{\dagger }[u,z]b\right\rangle_{[\,]},
\end{equation*}
for all $a,b\in V$ with $Y^{\dagger}[u,z]=Y\left[e^{z L[1]}\left(-{z^{-2}}\right)^{L[0]}u,z^{-1}\right]$. 
It follows from \eqref{g2_zhushowsthat} that the original and square bracket Li-Zamolodchikov metrics are related by
\begin{equation}
\left\langle a,b\right\rangle_{(\,)}=\left\langle T  a,T  b\right\rangle_{[\,]} . \label{LiZrsq}
\end{equation}

From the definition of the adjoint operator \eqref{intro_adjop} we have
\begin{equation*}
T ^\dagger = \exp\left(\sum_{i\ge 0}\alpha_iL_{-i}\right).
\end{equation*}
We define
\begin{align}
\lambda &=\sum_{n\ge 0}\lambda^{(n)} = T^\dagger\vv \label{g2_lambdadefneqn},\\
\tl &= \sum_{n\ge 0}\tl^{[n]}= T \lambda , \label{g2_tldefneqn}
\end{align}
for round and square bracket Virasoro vacuum descendents $\lambda^{(n)}\in V_n$ and $\tl^{[n]}=T\lambda^{(n)}\in V_{[n]}$.
In Proposition~\eqref{prop:lambda} in the Appendix we show that $\lambda^{(n)}= 0$ for $n$ odd with 
\begin{align*}
\lambda^{(0)} &= \vv,\qquad \lambda^{(2)} = -\frac{1}{12}L_{-2}\vv, \qquad
\lambda^{(4)} = \frac{1}{288}L^2_{-2}\vv - \frac{1}{480}L_{-4}\vv,\\
\lambda^{(6)} &= -\frac{1}{10368}L^{3}_{-2}\vv + \frac{1}{5760}L_{-4}L_{-2}\vv + \frac{1}{12096}L_{-6}\vv,
\end{align*}
with similar formulas for $\tl^{[n]}$ in terms of square bracket Virasoro descendents.
 
Finally, using  \eqref{ou} and \eqref{LiZrsq} we note that
\begin{lemma}\label{g2_kqbrackisolemma}
For $u\in V_{[n]}$ then
\begin{equation*}
\langle\vv,o(u)\vv\rangle_{(\,)} = \langle\tl^{[n]},u\rangle_{[\,]}.
\end{equation*}
\end{lemma}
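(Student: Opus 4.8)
The plan is to show that both sides of the asserted identity compute the same scalar, namely the round-bracket pairing $\langle\vv,u\rangle_{(\,)}$, which extracts the coefficient of $\vv$ in the round-weight-zero part of $u$. I would first dispose of the left-hand side by analysing the zero mode $o(u)$ acting on the vacuum, and then transport $\langle\vv,u\rangle_{(\,)}$ over to the square-bracket side using the metric relation \eqref{LiZrsq} together with the definitions \eqref{g2_lambdadefneqn} and \eqref{g2_tldefneqn} of $\lambda$ and $\tl$.

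For the left-hand side I would decompose $u=\sum_k u^{(k)}$ into its round-bracket homogeneous components $u^{(k)}\in V_k$, so that by linearity and \eqref{ou} we have $o(u)\vv=\sum_k (u^{(k)})_{k-1}\vv$. The mode $(u^{(k)})_{k-1}$ annihilates $\vv$ whenever $k\ge 1$, since a non-negative mode kills the vacuum; and since $V$ is of CFT-type, $V_k=0$ for $k<0$ while $u^{(0)}\in V_0=\mathbb{C}\vv$ with $o(\vv)=\operatorname{Id}$. Hence only the $k=0$ term survives and $o(u)\vv=\langle\vv,u\rangle_{(\,)}\,\vv$, where I use that $\langle\vv,u\rangle_{(\,)}$ isolates exactly this vacuum coefficient (the pairing of $\vv\in V_0$ with any higher weight component vanishing). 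Pairing with $\vv$ and using $\langle\vv,\vv\rangle_{(\,)}=1$ then yields $\langle\vv,o(u)\vv\rangle_{(\,)}=\langle\vv,u\rangle_{(\,)}$.

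For the right-hand side I would first note that the square-bracket form vanishes between vectors of distinct square-bracket weight, so since $u\in V_{[n]}$ only the square-weight-$n$ part of $\tl$ contributes, giving $\langle\tl^{[n]},u\rangle_{[\,]}=\langle\tl,u\rangle_{[\,]}=\langle T\lambda,u\rangle_{[\,]}=\langle TT^\dagger\vv,u\rangle_{[\,]}$ via \eqref{g2_tldefneqn} and \eqref{g2_lambdadefneqn}. Applying \eqref{LiZrsq} in the form $\langle c,d\rangle_{[\,]}=\langle T^{-1}c,T^{-1}d\rangle_{(\,)}$ turns this into $\langle T^\dagger\vv,T^{-1}u\rangle_{(\,)}$. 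Since $L_n^\dagger=L_{-n}$, the operator $T^\dagger=\exp(\sum_{i\ge 0}\alpha_iL_{-i})$ is the genuine round-metric adjoint of $T$, so this equals $\langle\vv,TT^{-1}u\rangle_{(\,)}=\langle\vv,u\rangle_{(\,)}$. Comparison with the left-hand side then closes the argument.

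The manipulations are routine once the set-up is in place; the only point needing genuine care is the interplay of the two gradings. One must keep the round-bracket weight (which governs the zero mode $o(u)$ and the surviving $k=0$ term on the left) distinct from the square-bracket weight (which governs the selection $\tl^{[n]}\mapsto\tl$ on the right), and one must verify $T\vv=\vv$ so that the adjoint step $\langle T^\dagger\vv,T^{-1}u\rangle_{(\,)}=\langle\vv,u\rangle_{(\,)}$ is legitimate. The latter reduces to $L_i\vv=0$ for all $i\ge 0$, which is immediate from the vacuum axiom. I expect this bookkeeping of the two gradings, rather than any single calculation, to be the main obstacle.
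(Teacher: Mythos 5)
Your proof is correct and follows essentially the same route as the paper's: the creation axiom (plus CFT-type grading) reduces the left side to $\langle\vv,u\rangle_{(\,)}$, and the combination of $T^\dagger$-adjointness, the metric relation \eqref{LiZrsq}, and the square-bracket weight grading identifies this with $\langle\tl^{[n]},u\rangle_{[\,]}$, exactly as in the paper, merely with the right-hand computation run in the opposite direction and the zero-mode step spelled out in more detail. One small remark: the adjoint step $\langle T^\dagger\vv,T^{-1}u\rangle_{(\,)}=\langle\vv,TT^{-1}u\rangle_{(\,)}$ needs only the adjointness and symmetry of the form, not $T\vv=\vv$, so that verification is superfluous.
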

\begin{proof}
Using the creation axiom we have $\langle \vv,o(u)\vv\rangle_{(\,)} = \langle \vv,u\rangle_{(\,)}$. 
Thus
\begin{align*}
&\langle \vv,o(u)\vv\rangle_{(\,)} = \langle \vv, T  T^{-1}u\rangle_{(\,)}
=\langle \lambda , T^{-1}u\rangle_{(\,)}.
\end{align*}
Applying  \eqref{LiZrsq} implies   since $u\in V_{[n]}$ that
\begin{align*}
\langle \lambda , T^{-1}u\rangle_{(\,)}&= \langle \tl, u\rangle_{[\,]}= \langle \tl^{[n]}, u\rangle_{[\,]}.
\end{align*}

\end{proof}

\medskip

\subsection{Genus One Partition and Correlation Functions}\label{sec:g1_pfun}
The genus one partition function for $V$ is defined by 
\begin{equation}
Z_{V}^{(1)}(q)=\Tr_{V}\left(q^{L_{0}-C/24}\right),
\label{Z1_1}
\end{equation}
and the 1-point correlation function by 
\begin{equation}
Z_{V}^{(1)}(u,q)=\Tr_{V}\left(o(u)q^{L_{0}-C/24}\right),
\label{Zu_1} 
\end{equation}
recalling \eqref{ou}. We may similarly define the partition function and 1-point function for any module $N$ of $V$ \cite{FHL} by replacing the trace over $V$  by a trace over $N$. Thus, as is well known, for the rank 1 Heisenberg VOA $M$ we find $Z_{M}^{(1)}(q)=1/\eta(q)$, whereas for an irreducible $M$-module $N_\alpha=M\otimes e^\alpha$ we find $Z_{N_\alpha}^{(1)}(q)=q^{\alpha^2/2}/\eta(q)$.

Zhu obtained various recursion formulas for $n$-point functions \cite{Z}. In particular 
\begin{theorem}[Zhu]
\label{thm:Zhured}
For a $V$-module $N$, the 1-point function for the Virasoro descendent $L[-k]u$ of $u\in V$ for $k\ge 1$ obeys
\begin{equation} 
Z_N^{(1)}(L[-k]u,q) = \delta_{k,2}\qd Z_N^{(1)}(q) + \sum_{r\ge 0}(-1)^r\binom{k+r-1}{r+1} E_{k+r}(q)Z_N^{(1)}(L[r]u,q).
\end{equation}
\label{Zred}
\end{theorem}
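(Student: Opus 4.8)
The plan is to follow Zhu's original derivation, in which the reduction formula emerges from combining trace cyclicity with the behaviour of the square-bracket Virasoro field under the elliptic geometry of the torus. The object to control is $Z_N^{(1)}(L[-k]u,q)=\Tr_N\big(o(L[-k]u)\,q^{L_0-C/24}\big)$, and the essential difficulty is that $o(L[-k]u)$ is the zero mode of a \emph{product} state $L[-k]u=\wtil[-k+1]u$, not a product of zero modes. To handle this I would introduce the auxiliary insertion function $\Tr_N\big(Y[\wtil,x]\,o(u)\,q^{L_0-C/24}\big)$, whose Laurent coefficients in the formal variable $x$ package all the traces $\Tr_N\big(o(\wtil[m]u)\,q^{L_0-C/24}\big)$ simultaneously; the target $L[-k]$ then corresponds to reading off the coefficient that picks out the mode $\wtil[-k+1]$.

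The engine of the proof rests on two standard facts. First, the Virasoro commutator with a vertex operator,
\begin{equation*}
[L[m],Y[u,w]]=\sum_{j\ge 0}\binom{m+1}{j}w^{\,m+1-j}\,Y\big[L[j-1]u,w\big],
\end{equation*}
lets me commute each $L[m]$ past the insertion and re-express everything in terms of the lower one-point functions $Z_N^{(1)}(L[r]u,q)$ with $r\ge 0$; the $j=0$ translation term produces only $L[-1]u$, which contributes a total $x$-derivative and does not survive in the trace. Second, cyclicity of the trace together with the conjugation law $q^{L_0}L[n]q^{-L_0}=q^{-n}L[n]$ — valid because $L[0]=L_0$ forces $[L_0,L[n]]=-nL[n]$ — means that carrying a mode once around the trace reproduces the same trace up to an explicit power of $q$. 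Equating the two ways of evaluating the auxiliary function then yields a functional equation in $x$ whose solution is forced to be a (quasi-)elliptic function on the torus $\mathbb{C}/(\mathbb{Z}+\tau\mathbb{Z})$.

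I would then identify that quasi-elliptic function with the appropriate Weierstrass-type kernel $P_k(x,q)$, whose Laurent expansion is arranged so that reading off the relevant coefficient reproduces precisely the binomial-weighted Eisenstein sum $\sum_{r\ge 0}(-1)^r\binom{k+r-1}{r+1}E_{k+r}(q)\,Z_N^{(1)}(L[r]u,q)$ in the statement. The isolated term $\delta_{k,2}\qd Z_N^{(1)}(q)$ is the anomaly sitting in the weight-two case: it arises from the single non-elliptic (quasi-modular $E_2$) piece of the kernel, which is equivalently the statement that inserting the zero mode of the weight-two field $\wtil$ differentiates the partition function with respect to the modular parameter, $q\partial_q=\qd$.

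The main obstacle I expect is the zero-mode-of-a-product bookkeeping: carefully justifying that the coefficient extracted from the auxiliary function genuinely equals $Z_N^{(1)}(L[-k]u,q)$, and then matching the combinatorial output of the commutator sum to the Laurent coefficients of $P_k(x,q)$ so that the binomial factors $\binom{k+r-1}{r+1}$ and the signs $(-1)^r$ come out exactly right. Pinning down the $k=2$ quasi-modular correction — showing it contributes precisely $\qd Z_N^{(1)}(q)$ and nothing more — is the most delicate point, and the place where the $E_2$ term hidden in $P_2(x,q)$ must be tracked by hand.
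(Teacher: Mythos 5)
The paper offers no proof to compare against: Theorem~\ref{thm:Zhured} is quoted verbatim from Zhu \cite{Z} (see also \cite{MT4}), so your proposal can only be judged as a reconstruction of Zhu's original argument. In broad strokes you have assembled the right ingredients: trace cyclicity combined with the conjugation $q^{L_0}L[n]q^{-L_0}=q^{-n}L[n]$ forcing quasi-ellipticity, expansion in Weierstrass-type kernels, Laurent coefficients producing the signs, binomials and Eisenstein series, and the quasi-modular $E_2$ piece of the weight-two kernel accounting for the exceptional $\delta_{k,2}$ term. The $j=0$ translation term being a total derivative is also handled correctly at sketch level.

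There is, however, one concretely wrong step. Your auxiliary object $\Tr_N\bigl(Y[\wtil,x]\,o(u)\,q^{L_0-C/24}\bigr)$ does not package the quantities you need: its Laurent coefficients are $\Tr_N\bigl(L[m]\,o(u)\,q^{L_0-C/24}\bigr)$, with the mode \emph{multiplying} $o(u)$, not $\Tr_N\bigl(o(L[m]u)\,q^{L_0-C/24}\bigr)$ with the mode acting on $u$ inside the insertion. By weight grading almost all of the former vanish (only the $L_0$-component of $L[m]$ survives in the trace), so no coefficient of your generating function "reads off" $Z_N^{(1)}(L[-k]u,q)$. What Zhu actually uses is the genuine two-point object $\Tr_N\bigl(Y[\wtil,x]\,Y[u,z]\,q^{L_0-C/24}\bigr)$ together with square-bracket associativity, so that expanding in $x-z$ produces the modes $\wtil[-k+1]u=L[-k]u$ \emph{inside} a single vertex operator; only after that step do your commutator and cyclicity manipulations convert the resulting elliptic-function identity into the stated recursion. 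So the missing idea is the associativity step itself, not mere "bookkeeping" --- without it the proposed generating function computes the wrong traces. One further point: a correct derivation yields the exceptional term as $\delta_{k,2}\,\qd Z_N^{(1)}(u,q)$, since $o(\wtil)=L_0-C/24$ acts with $o(u)$ still inserted; the displayed formula in the paper, which reads $\delta_{k,2}\,\qd Z_N^{(1)}(q)$, contains a typo (as the paper's own use of the theorem in the proof of Proposition~\ref{g1_primtheorem}, where the term appears as $\delta_{k,2}\qd Z_N^{(1)}(p,q)$, confirms), and your sketch inherited that typo at face value.
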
 
We note the following useful consequence of Theorem~\ref{thm:Zhured} concerning the Virasoro descendents of a primary vector $p$:
\begin{proposition}\label{g1_primtheorem}
Let $v=L[-k_1]\ldots L[-k_m]p$ be a  Virasoro descendent of a primary vector $p$ of weight $\wtr[p]$.
The one point function for $v$ for a $V$-module $N$ is given by 
\begin{equation}
Z_N^{(1)}(v,q) = \sum_{i=0}^m F_{i}^m(q,C)\qd^{i}Z_N^{(1)}(p,q),
\label{FqCsum}
\end{equation}
where $F_i(q,C)$ is a quasi-modular form of weight $k_1+\ldots+ k_m-2i$. Furthermore, 
\begin{equation*}
 F_i^m(q,C)= \sum_{j=0}^{\lfloor\frac{1}{2}(m-i)\rfloor} F_{ij}^m(q)C^j,
\end{equation*} 
where $F_{ij}^m(q)$ may depend on $\wtr[p]$ but is independent of the VOA $V$.
\end{proposition}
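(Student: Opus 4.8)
The plan is to prove \eqref{FqCsum} by induction on the number $m$ of square bracket Virasoro modes appearing in $v$, peeling off one outer mode at a time with Zhu's reduction formula (Theorem~\ref{thm:Zhured}). The base case $m=0$ is immediate: $v=p$ and \eqref{FqCsum} holds with $F^0_0(q,C)=1$, a modular form of weight $0$ and $C$-degree $\lfloor 0/2\rfloor=0$. For the inductive step I would write $v=L[-k_1]w$ with $w=L[-k_2]\ldots L[-k_m]p$ a descendent of $p$ with $m-1$ modes, and apply Theorem~\ref{thm:Zhured} with $k=k_1$ and $u=w$:
\[
Z_N^{(1)}(v,q)=\delta_{k_1,2}\,\qd Z_N^{(1)}(w,q)+\sum_{r\ge 0}(-1)^r\binom{k_1+r-1}{r+1}E_{k_1+r}(q)\,Z_N^{(1)}(L[r]w,q).
\]
The terms $Z_N^{(1)}(w,q)$ and $Z_N^{(1)}(L[0]w,q)=\wtr[w]\,Z_N^{(1)}(w,q)$ are covered by the inductive hypothesis for $m-1$ modes. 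For $r\ge 1$ I would repeatedly apply the square bracket Virasoro relations $[L[r],L[-k]]=(r+k)L[r-k]+\tfrac{C}{12}(r^3-r)\delta_{r,k}$ to commute $L[r]$ to the right, using primality $L[s]p=0$ for $s\ge 1$ to annihilate the pass-through term. Since $r\ge 1$ every surviving contribution involves at least one commutator, so this expresses $L[r]w$ as a $C$-linear combination $\sum_s C^{a_s}w_s$ of Virasoro descendents $w_s$ of $p$, each with strictly fewer than $m$ modes, to which the inductive hypothesis again applies.

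Two homogeneity statements then have to be tracked through the recursion. For the weight: each $w_s$ is homogeneous of square bracket weight $\wtr[p]+(k_2+\cdots+k_m)-r$, so by induction its coefficient of $\qd^i Z_N^{(1)}(p,q)$ is quasi-modular of weight $(k_2+\cdots+k_m-r)-2i$, and multiplying by $E_{k_1+r}(q)$ of weight $k_1+r$ yields weight $(k_1+\cdots+k_m)-2i$, the dependence on $r$ cancelling exactly. The $\delta_{k_1,2}\,\qd$ term is consistent because $\qd$ raises quasi-modular weight by $2$ (using \eqref{delE2} and closure of $\mathbb{C}[E_2,E_4,E_6]$ under $\qd$) while shifting $i\mapsto i+1$. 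As products and $\qd$-derivatives of quasi-modular forms are quasi-modular, each $F_i^m(q,C)$ is quasi-modular of weight $k_1+\cdots+k_m-2i$ as claimed.

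The step I expect to be most delicate is the bound on the degree in $C$. Here I would bookkeep, for each term $C^{a_s}w_s$ of $L[r]w$, the quantity $m_s+2a_s$, where $m_s$ is the number of modes of $w_s$: commuting $L[r]$ past a mode either merges two modes into one with no factor of $C$, or removes two modes and emits one factor of $C$ from the central term, so a count of modes starting from $m$ (the $m-1$ modes of $w$ together with $L[r]$) gives $m_s+2a_s\le m$. Combined with the inductive bound $\deg_C\le\lfloor(m_s-i)/2\rfloor$ for $w_s$ and the elementary inequality $a_s+\lfloor(m_s-i)/2\rfloor\le\lfloor(m_s+2a_s-i)/2\rfloor\le\lfloor(m-i)/2\rfloor$, this produces exactly $\deg_C F_i^m\le\lfloor(m-i)/2\rfloor$. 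Finally, the $V$-independence of the $F_{ij}^m(q)$ is built into the construction: the only module-dependent object entering is $Z_N^{(1)}(p,q)$ together with its $\qd$-derivatives, whereas every coefficient generated is assembled from binomial coefficients, Eisenstein series, the central charge $C$ (tracked explicitly), and the weight eigenvalues $\wtr[w]=\wtr[p]+k_2+\cdots+k_m$, which depend only on $\wtr[p]$ and the $k_j$. Thus the main obstacle is the simultaneous, consistent tracking of the $C$-degree and the mode count through the reduction of $L[r]w$; the weight bookkeeping, by contrast, follows from the clean cancellation of $r$ noted above.
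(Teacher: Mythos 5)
Your proposal is correct and follows essentially the same route as the paper: induction on $m$ using Zhu's reduction formula (Theorem~\ref{thm:Zhured}) together with the square bracket Virasoro commutation relations, tracking the quasi-modular weight via the cancellation of $r$ against $E_{k_1+r}$ and the $C$-degree through the central terms. Your explicit monovariant $m_s+2a_s\le m$ for bounding the degree in $C$ is a more carefully spelled-out version of the paper's terser count (where the $r=k_s$ central contribution is noted to give highest power $1+\lfloor\frac{m-1-i}{2}\rfloor=\lfloor\frac{m+1-i}{2}\rfloor$), but the underlying argument is the same.
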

\begin{proof}
We prove the result by induction in $m$. For $m=1$ we use \eqref{Zred} to find
\begin{align*}
Z_N^{(1)}(L[-k]p,q)&=\delta_{k,2}\qd Z_N^{(1)}(p,q) + (k-1)\wtr[p]E_k(q) Z_N^{(1)}(p,q),
\end{align*}
since $p$ is primary of weight $\wtr[p]$. Thus we obtain non-zero coefficients
\begin{align*}
F_0^1&=F_{00}^1= (k-1)\wtr[p]E_k(q),\\
 F_1^1&=F_{10}^1 = \delta_{k,2},
\end{align*}
which are quasi-modular forms of weight $k=k-2(0)$ and $0=k-2(1)$ (for $k=2$) respectively but are independent of the VOA $V$.

Consider the 1-point function for $L[-k]v$ with $v=L[-k_1]\ldots L[-k_m]p$ and apply \eqref{Zred} to obtain
\begin{equation}
Z_N^{(1)}(L[-k]v,q) = \delta_{k,2}\qd Z_N^{(1)}(v,q) + \sum_{r\ge0}(-1)^r\binom{k+r-1}{r+1}E_{k+r}(q)Z_N^{(1)}(L[r]v,q).\label{expansionm}
\end{equation}
By induction, we apply \eqref{FqCsum} to $Z_N^{(1)}(v,q)$ so that
\begin{align*}
\qd Z_N^{(1)}(v,q) &=\sum_{i=0}^m\qd F_i^m(q,C)\qd^iZ_N(v,q) + \sum_{i=1}^{m+1}F_{i-1}^m(q,C)\qd^{i}Z_N^{(1)}(v,q).
\end{align*}
$\qd F_i^m(q,C)$  and $F_{i-1}^m(q,C)$ are quasi-modular forms of weight $2+k_1+\ldots k_m-2i $ and $2+k_1+\ldots k_m-2(i-1) $ respectively as required. In both cases the highest power of $C$ is $\lfloor\frac{m-i}{2}\rfloor \leq \lfloor\frac{m+1-i}{2}\rfloor$. 

Consider the remaining $E_{k+r}(q)Z_N^{(1)}(L[r]v,q)$ terms in \eqref{expansionm}. The Virasoro algebra implies
\begin{align*}
L[r]v &= \sum_{s=1}^{m}L[-k_1]\ldots \left((r+k_s)L[r-k_s]+C\frac{k_s^3-k_s}{12}\delta_{r,k_s}  \right)\ldots L[-k_m]p.
\end{align*}
By induction, we may apply \eqref{FqCsum} to the various Virasoro descendents of $p$ appearing above to find $Z_N^{(1)}(L[r]v,q)$. In particular, it is easy to check that the coefficient of $\qd^{i}Z_N^{(1)}(v,q)$ in $Z_N^{(1)}(L[r]v,q)$ is quasi-primary of weight $k_1+\ldots+ k_m-r-2i$ so that the
coefficient of $\qd^{i}Z_N^{(1)}(v,q)$ in $Z_N^{(1)}(L[-k]v,q)$ is quasi-primary of weight $k+k_1+\ldots+ k_m-2i$, as required.
Finally, if $r=k_s$ for some $k_s$, then a contribution arises with highest power of $C$ given by $1+\lfloor\frac{m-1-i}{2}\rfloor=\lfloor\frac{m+1-i}{2}\rfloor$. 
\end{proof}

For our later purposes, it is useful to define a normalized partition function
\begin{equation}
\zh_N^{(1)}(q) = \eta^C(q) Z_N^{(1)}(q),
\label{thetaN}
\end{equation} 
 for a $V$-module $N$.
We then find that Proposition~\ref{g1_primtheorem} implies 
\begin{corollary}\label{g1_maintheoremcorol}
Let $v=L[-k_1]\ldots L[-k_m]\vv$ be a Virasoro descendent of the vacuum of weight $n=k_1+\ldots+k_m$. Then  the one point function for a $V$-module $N$ is given by 
\begin{equation*}
Z_N^{(1)}(L[-k_1]\ldots L[-k_m]\vv,q) = \frac {1}{\eta(q)^C}\sum_{i=0}^m\sum_{j=0}^{m-i} G^n_{ij}(q)C^j\qd^{i}\zh_N^{(1)}(q),
\end{equation*}
where $G^n_{ij}(q)$ is a quasi-modular form of weight $n-2i$ that is independent of the VOA and the central charge $C$.
\end{corollary}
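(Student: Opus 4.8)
The plan is to specialise Proposition~\ref{g1_primtheorem} to the primary vector $p=\vv$ and then trade the ordinary partition function $Z_N^{(1)}(q)$ for the normalised one $\zh_N^{(1)}(q)$ of \eqref{thetaN}. First I would record that $\vv$ is square-bracket primary of weight $\wtr[\vv]=0$ (since $L[k]\vv=T L_k\vv=0$ for $k\ge 1$) and that $o(\vv)=\vv_{-1}=\operatorname{Id}$ because $Y(\vv,z)=\operatorname{Id}$, so that $Z_N^{(1)}(\vv,q)=Z_N^{(1)}(q)$ by \eqref{Zu_1}. Applying Proposition~\ref{g1_primtheorem} with $p=\vv$ then gives
\[
Z_N^{(1)}(L[-k_1]\ldots L[-k_m]\vv,q)=\sum_{i=0}^m F_i^m(q,C)\,\qd^i Z_N^{(1)}(q),
\]
where $F_i^m(q,C)=\sum_j F_{ij}^m(q)C^j$ is quasi-modular of weight $n-2i$, independent of $V$, and of $C$-degree $\lfloor\tfrac12(m-i)\rfloor$.

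The key analytic step is to conjugate the derivative $\qd$ by the factor $\eta^{-C}$. From \eqref{deleta} one has $\qd\eta^{-C}=\tfrac{C}{2}E_2\,\eta^{-C}$, so for any function $f$,
\[
\qd\bigl(\eta^{-C}f\bigr)=\eta^{-C}\bigl(\qd+\tfrac{C}{2}E_2\bigr)f .
\]
Writing $D_C:=\qd+\tfrac{C}{2}E_2$, this is the operator identity $\qd\circ\eta^{-C}=\eta^{-C}\circ D_C$, and iterating it yields $\qd^i\bigl(\eta^{-C}\zh_N^{(1)}\bigr)=\eta^{-C}D_C^{\,i}\zh_N^{(1)}$. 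Substituting $Z_N^{(1)}(q)=\eta^{-C}\zh_N^{(1)}(q)$ into the display above thus factors out the required $\eta^{-C}$ and reduces everything to analysing the twisted derivative $D_C^{\,i}$.

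I would then expand $D_C^{\,i}=\sum_{l=0}^i B_{il}(q,C)\,\qd^l$ and prove by induction on $i$ that each $B_{il}$ is a quasi-modular form of weight $2(i-l)$ with $C$-degree at most $i-l$. The inductive step uses the recursion $B_{i+1,l}=\qd B_{il}+B_{i,l-1}+\tfrac{C}{2}E_2 B_{il}$ arising from $D_C^{\,i+1}=D_C D_C^{\,i}$; here $\qd E_2=5E_4-E_2^2$ from \eqref{delE2} keeps every coefficient inside the ring of quasi-modular forms, and each of the three terms manifestly has weight $2((i+1)-l)$ and $C$-degree at most $(i+1)-l$, with base case $B_{00}=1$. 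Collecting, the coefficient of $\qd^i\zh_N^{(1)}$ becomes $\sum_{i'\ge i}F_{i'}^m(q,C)\,B_{i'i}(q,C)$, which I expand in powers of $C$ to define the $G_{ij}^n(q)$.

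Finally I would verify the two bookkeeping claims. Each summand $F_{i'}^m B_{i'i}$ has weight $(n-2i')+2(i'-i)=n-2i$ independently of $i'$, so $G_{ij}^n(q)$ is quasi-modular of weight $n-2i$; its independence of $V$ and of $C$ follows since the $F_{i'j'}^m$ are $V$-independent quasi-modular forms and the $B_{i'i}$ are universal polynomials in $C$ with quasi-modular coefficients, from which the powers of $C$ have been extracted explicitly. For the range of $j$, the $C$-degree of $F_{i'}^m B_{i'i}$ equals $\lfloor\tfrac12(m-i')\rfloor+(i'-i)$, which is at most $m-i$ because $\lfloor\tfrac12(m-i')\rfloor\le m-i'$; this caps $j$ at $m-i$ as stated. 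The main obstacle is the second paragraph: pinning down the conjugation identity and the induced twisted derivative $D_C$ exactly, since the clean factoring out of $\eta^{-C}$ and the precise weight and $C$-degree accounting all hinge on it, while the remaining steps are routine induction and arithmetic.
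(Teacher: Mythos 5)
Your proposal is correct and follows exactly the route the paper intends: its one-line proof invokes Proposition~\ref{g1_primtheorem} with $p=\vv$ together with \eqref{deleta}, which is precisely your specialisation plus the conjugation identity $\qd\circ\eta^{-C}=\eta^{-C}\circ\bigl(\qd+\tfrac{C}{2}E_2\bigr)$, and your weight and $C$-degree bookkeeping (including the bound $\lfloor\tfrac12(m-i')\rfloor+(i'-i)\le m-i$) correctly supplies the details the paper leaves implicit. The only discrepancy is in the paper itself, whose proof text mistakenly cites the corollary rather than Proposition~\ref{g1_primtheorem}.
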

\begin{proof}
The proof follows directly from Proposition~\ref{g1_maintheoremcorol} and \eqref{deleta}.
Alternatively, one can employ an explicit description of all Virasoro vacuum descendent correlation functions given in Theorem~3.8 of \cite{HT}.
\end{proof}

\medskip
\section{The Genus Two Partition Function}
\label{sec:g2}
\subsection{Sewing Two Tori}
We now review a general method due to Yamada \cite{Y} and developed further in \cite{MT1} 
for calculating the period matrix (and other structures) on a Riemann surface formed by sewing together two lower genus Riemann surfaces. In particular, we wish to describe the period matrix $\Omega _{ij} \in \mathbb{H}_{2}$, the genus $2$ Siegel complex upper half-space, on a genus two Riemann surface formed by sewing together two tori. Consider an oriented torus $\mathcal{S}_{a}=\mathbb{C}/\Lambda _{\tau _{a}}$ with lattice $\Lambda _{\tau _{a}}$ with basis $(2\pi i,2\pi i\tau _{a})$ for $\tau _{a}\in \mathbb{H}$, the complex upper half plane. For local coordinate $z_{a}\in \mathcal{S}_{a}$ consider the closed disk $\left\vert z_{a}\right\vert \leq r_{a}$. This is contained in $\mathcal{S}_{a}$ provided\ $r_{a}<\frac{1}{2}D(q_{a})$ where 
\begin{equation*}
D(q_{a})=\min_{\lambda \in \Lambda _{\tau _{a}},\lambda \neq 0}|\lambda |,
\end{equation*}%
is the minimal lattice distance.
\medskip
Introduce a sewing parameter $\epsilon \in \mathbb{C}$ where $|\epsilon|\leq r_{1}r_{2}<\frac{1}{4}D(q_{1})D(q_{2})$ and excise the disk $
\{z_{a},\left\vert z_{a}\right\vert \leq |\epsilon |/r_{\bar{a}}\}$ centred at $z_{a}=0$ to form a punctured torus 
\begin{equation*}
\hat{\mathcal{S}}_{a}=\mathcal{S}_{a}\backslash \{z_{a},\left\vert
z_{a}\right\vert \leq |\epsilon |/r_{\bar{a}}\},
\end{equation*}
where we use the convention 
\begin{equation*}
\overline{1}=2,\quad \overline{2}=1.
\end{equation*}%
Define the annulus
\begin{equation*}
\mathcal{A}_{a}=\{z_{a},|\epsilon |/r_{\bar{a}}\leq \left\vert z_{a}\right\vert \leq r_{a}\}\subset \hat{\mathcal{S}}_{a}.
\end{equation*}
We then identify $\mathcal{A}_{1}$ with $\mathcal{A}_{2}$ via the sewing relation 
\begin{equation*}
z_{1}z_{2}=\epsilon ,
\end{equation*}
to obtain an explicit construction of a genus two Riemann surface
\begin{equation*}
\mathcal{S}^{(2)}=\hat{\mathcal{S}}_{1}\cup \hat{\mathcal{S}}_{2}\cup (\mathcal{A}_{1}\simeq \mathcal{A}_{2}),
\end{equation*}
parametrised by the domain 
\begin{equation*}
\mathcal{D}^{\epsilon }=\{(\tau _{1},\tau _{2},\epsilon )\in \mathbb{H}\times\mathbb{ H}\times\mathbb{ C}\, :\, |\epsilon |<\frac{1}{4}D(q_{1})D(q_{2})\}. 
\end{equation*}
In \cite{Y}, Yamada describes a general method for computing
the period matrix on the sewn Riemann surface $\mathcal{S}^{(2)}$ in terms of data obtained from the two tori. This is described in detail in \cite{MT1} where the explicit form for $\Omega $ is obtained in terms of the infinite matrix $A_{a}=(A_{a}(k,l,q_{a},\epsilon ))$ for $k,l\geq 1$ where 
\begin{equation*}
A_{a}(k,l,q_{a},\epsilon)=\epsilon ^{(k+l)/2 }\frac{(-1)^{l+1}(k+l-1)!}{\sqrt{kl}(k-1)!(l-1)!}E_{k+l}(q_a ), 
\end{equation*}
where $E_{k+l}(q_a)$ is the Eisenstein series \eqref{intro_Ekdefn}. 

The matrix $I-A_{1}A_{2}$ where $I$ denotes the infinite identity matrix plays an important role in our discussion. In particular we have the following convergence theorem:
\begin{theorem}\cite{MT1}
\label{g2_thmA1A2} \ \ 
\begin{enumerate}
\item[(a)] $(I-A_{1}A_{2})^{-1} =I+\sum_{n\geq 1}(A_{1}A_{2})^{n}$ is convergent on $\mathcal{D}^{\epsilon
} $.
\item[(b)] $\det (I-A_{1}A_{2})$ defined by
\begin{align*}
\log \det (I-A_{1}A_{2}) =\Tr\log (I-A_{1}A_{2})=-\sum_{n\geq 1}\frac{1}{n}\Tr (A_{1}A_{2})^{n},
\end{align*}
 is holomorphic and non-vanishing  on $\mathcal{D}^{\epsilon }$.
\end{enumerate}
\end{theorem}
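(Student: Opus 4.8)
The plan is to regard $A_1$ and $A_2$ as operators on the Hilbert space $\ell^2(\mathbb{Z}_{\ge 1})$ of square-summable sequences, to extract from the explicit entries a geometric decay strong enough to force $\|A_1A_2\|<1$ on $\mathcal{D}^{\epsilon}$, and then to read off both statements from standard operator theory. First I would record the lattice interpretation of the Eisenstein series: for the lattice $\Lambda_{\tau_a}$ with basis $(2\pi i,2\pi i\tau_a)$ one has $E_k(q_a)=\sum_{\lambda\in\Lambda_{\tau_a}\setminus\{0\}}\lambda^{-k}$ for $k\ge 4$, with the $k=2$ case contributing only the single exceptional entry $k=l=1$, which is harmless. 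Since every nonzero $\lambda$ satisfies $|\lambda|\ge D(q_a)$ and the normalized sum $\sum_\lambda\bigl(D(q_a)/|\lambda|\bigr)^k$ is bounded uniformly in $k$, this yields the key estimate $|E_k(q_a)|\le c_a\,D(q_a)^{-k}$ with $c_a$ depending only on $\tau_a$. Two structural observations then make the rest clean: because $E_k=0$ for odd $k$, only entries with $k+l$ even survive, so $\epsilon^{(k+l)/2}$ is always an integer power of $\epsilon$ and no branch ambiguity arises; and rewriting the prefactor as $\tfrac{(k+l-1)!}{(k-1)!(l-1)!}=\tfrac{kl}{k+l}\binom{k+l}{k}$ exposes precisely the binomial growth that must be controlled.

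Inserting the Eisenstein bound gives an entrywise estimate $|A_a(k,l)|\le c_a\,\sqrt{kl}\,\binom{k+l}{k}\bigl(\sqrt{|\epsilon|}/D(q_a)\bigr)^{k+l}$. The crucial device is the convolution structure of $A_1A_2$: its $(k,l)$ entry is $\sum_m A_1(k,m)A_2(m,l)$, and the geometric factors combine so that the inner sum over $m$ carries the weight $\bigl(|\epsilon|/(D(q_1)D(q_2))\bigr)^m$, which is summable exactly because the domain forces $|\epsilon|<\tfrac14 D(q_1)D(q_2)$. I would use this to estimate the Hilbert–Schmidt norm $\|A_1A_2\|_{HS}^2=\sum_{k,l}|(A_1A_2)(k,l)|^2$ and show it is finite and, on $\mathcal{D}^{\epsilon}$, strictly below $1$. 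Since the operator norm is dominated by the Hilbert–Schmidt norm, $\|A_1A_2\|_{op}<1$, so the Neumann series $I+\sum_{n\ge1}(A_1A_2)^n$ converges in operator norm to $(I-A_1A_2)^{-1}$, giving part (a).

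For part (b), each $A_a$ is itself Hilbert–Schmidt by the same estimate, hence $A_1A_2$ is a product of two Hilbert–Schmidt operators and therefore trace class. The Fredholm determinant $\det(I-A_1A_2)$ is thus well defined, and for $\|A_1A_2\|_{op}<1$ it equals $\exp\bigl(\Tr\log(I-A_1A_2)\bigr)=\exp\bigl(-\sum_{n\ge1}\tfrac1n\Tr(A_1A_2)^n\bigr)$, matching the stated formula. Non-vanishing is immediate: for a trace-class $K$ one has $\det(I-K)\ne0$ iff $I-K$ is invertible, and invertibility was established in (a). Holomorphy follows because every entry $A_a(k,l)$ is holomorphic in $(\tau_1,\tau_2,\epsilon)$ and the trace-class norm is bounded locally uniformly on $\mathcal{D}^{\epsilon}$, so the defining series converge locally uniformly to a holomorphic function.

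The step I expect to be the main obstacle is the sharp norm bound in the second paragraph. The crude estimate $\binom{k+l}{k}\le 2^{k+l}$ only controls each $A_a$ when $4|\epsilon|<D(q_a)^2$ holds for that $a$ separately, which the domain does not guarantee; the binomial growth must instead be absorbed by exploiting the convolution through the mixed product $D(q_1)D(q_2)$, rather than by estimating $A_1$ and $A_2$ in isolation. Equivalently, one recognizes the entries as Taylor coefficients of an explicit lattice generating function whose radius of convergence is governed by $D(q_a)$, and bounds the operator through that representation; obtaining the strict inequality $\|A_1A_2\|<1$ exactly on $\mathcal{D}^{\epsilon}$ is where the careful work lies.
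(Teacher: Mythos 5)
This theorem is quoted from [MT1] and the present paper contains no proof of it, so the benchmark is the argument in [MT1]; measured against that (or on its own terms), your proposal has a genuine gap at precisely the step you flag in your last paragraph, and the convolution device you propose does not close it. Your framework needs $A_{a}$ and $A_{1}A_{2}$ to be Hilbert--Schmidt/trace class on $\ell^{2}$ with $\|A_{1}A_{2}\|<1$, but the domain $\mathcal{D}^{\epsilon}$ only constrains the \emph{product} $D(q_{1})D(q_{2})$. Writing $y_{a}=\sqrt{|\epsilon|}/D(q_{a})$, your own estimate gives $|A_{a}(k,l)|\le c_{a}\frac{\sqrt{kl}}{k+l}(2y_{a})^{k+l}$, so $A_{a}$ is Hilbert--Schmidt only when $4|\epsilon|<D(q_{a})^{2}$; yet whenever $D(q_{1})<D(q_{2})$ (e.g.\ $\tau_{1}=it$ with $t$ small, $\tau_{2}=i$) there are points of $\mathcal{D}^{\epsilon}$ with $4|\epsilon|\ge D(q_{1})^{2}$. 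There the trace-class property of $A_{1}A_{2}$, and with it the entire Fredholm apparatus of your part (b), collapses. The convolution trick does tame the contracted index, since $\sum_{m}m(4y_{1}y_{2})^{m}<\infty$ exactly on $\mathcal{D}^{\epsilon}$, but the resulting bound is $|(A_{1}A_{2})(k,l)|\lesssim \sqrt{kl}\,(2y_{1})^{k}(2y_{2})^{l}$, and the divergence sits on the \emph{free} indices: the square sum over $(k,l)$ diverges whenever $2y_{1}\ge 1$. Indeed the dominant shortest-vector term in $E_{k+m}(q_{1})$ shows the columns of $A_{1}$ genuinely grow exponentially in the row index once $|\epsilon|>D(q_{1})^{2}$, so nothing of this kind can establish $\|A_{1}A_{2}\|_{HS}<1$ on all of $\mathcal{D}^{\epsilon}$ --- it is not even finite there. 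And even on the good sub-region the strict bound $<1$ fails near the boundary, since $\sum_{m}m(4y_{1}y_{2})^{m}=4y_{1}y_{2}/(1-4y_{1}y_{2})^{2}\to\infty$ as $|\epsilon|\uparrow\frac{1}{4}D(q_{1})D(q_{2})$.

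The way out, which is how [MT1] proceeds, is to notice that the theorem only asserts convergence of quantities in which every dangerous index is paired or fixed: the statement is about the entries of $\sum_{n}(A_{1}A_{2})^{n}$ (only $(1,1)$-entries enter Theorem~\ref{g2_thm_period_eps}) and about $\Tr (A_{1}A_{2})^{n}$, not about an $\ell^{2}$ operator norm. In any word $A_{1}A_{2}A_{1}A_{2}\cdots$, each contracted index $m$ appears once in an $A_{1}$ and once in an $A_{2}$ and so carries the paired weight $\left(4|\epsilon|/(D(q_{1})D(q_{2}))\right)^{m}$, which the domain makes summable; the two external indices of a fixed entry contribute harmless constants, and in a trace they close up and are likewise paired. [MT1] establishes absolute, locally uniform convergence of these series on $\mathcal{D}^{\epsilon}$ by estimates of this paired, entrywise type (with a more careful handling of constants than the crude $\binom{k+l}{k}\le 2^{k+l}$ bound affords), whence holomorphy; and non-vanishing of $\det(I-A_{1}A_{2})$ is then automatic because, as the statement itself indicates, the determinant is \emph{defined} as $\exp\left(-\sum_{n\ge1}\frac{1}{n}\Tr(A_{1}A_{2})^{n}\right)$ --- no appeal to ``$\det(I-K)\neq 0$ iff $I-K$ invertible'' is needed, which is fortunate since trace class fails on part of the domain. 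To salvage your write-up you would either have to shrink the domain to $4|\epsilon|<\min\left(D(q_{1}),D(q_{2})\right)^{2}$ (losing the theorem as stated) or reformulate convergence entrywise as above.
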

We also have an explicit expression for the period
matrix:
\begin{theorem}\cite{MT1}
\label{g2_thm_period_eps} The genus two period matrix $\Omega$ is holomorphic on $\mathcal{D}^{\epsilon }$ and is given by 
\begin{align*}
\Omega _{11} &=\tau _{1}+\frac{\epsilon}{2\pi i}
\left (A_{2}(I-A_{1}A_{2})^{-1}\right)(1,1), \\
\Omega _{22} &=\tau _{2}+\frac{\epsilon}{2\pi i}
\left (A_{1}(I-A_{2}A_{1})^{-1}\right)(1,1), \\
\Omega _{12} &=-\frac{\epsilon}{2\pi i} \left (I-A_{1}A_{2}\right)^{-1}(1,1),
\end{align*}%
where $(1,1)$ refers to the $(1,1)$-entry of the given matrix. 
\end{theorem}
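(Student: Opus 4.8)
The plan is to construct the two normalized holomorphic $1$-forms $\nu_1,\nu_2$ on $\mathcal{S}^{(2)}$ directly from the torus data and then read off the period matrix from their $b$-periods, $\Omega_{ij}=\frac{1}{2\pi i}\oint_{b_j}\nu_i$. I fix the canonical homology basis so that $a_i,b_i$ are the images under the sewing of the standard cycles of the torus $\mathcal{S}_i$, and I normalize by $\oint_{a_j}\nu_i=2\pi i\,\delta_{ij}$. On a single torus $\mathcal{S}_a$ the unique holomorphic form is $dz_a$, with $\oint_{a_a}dz_a=2\pi i$ and $\oint_{b_a}dz_a=2\pi i\,\tau_a$; this already supplies the diagonal terms $\tau_a$ when $\epsilon=0$, so the whole content of the theorem is to compute the $O(\epsilon)$ corrections forced by the gluing.

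The building blocks on each torus are encoded in the normalized differential of the second kind (the Bergman bidifferential), which near the sewing point $z_a=0$ has the expansion
\[
\omega_a(x,y)=\left(\frac{1}{(x-y)^2}+\sum_{k,l\ge1}C(k,l,q_a)\,x^{k-1}y^{l-1}\right)dx\,dy,\qquad C(k,l,q_a)=(-1)^{l+1}\frac{(k+l-1)!}{(k-1)!(l-1)!}E_{k+l}(q_a),
\]
with $E_{k+l}$ the Eisenstein series \eqref{intro_Ekdefn}. Comparing with the definition of $A_a$ shows $A_a(k,l)=\frac{\epsilon^{(k+l)/2}}{\sqrt{kl}}\,C(k,l,q_a)$, so $A_a$ is precisely the propagator that records how a mode $y^{l-1}$ on $\mathcal{S}_{\bar a}$, transported through the annulus by $z_1z_2=\epsilon$, feeds the mode $x^{k-1}$ on $\mathcal{S}_a$, in the symmetric normalization carrying the factor $1/\sqrt{kl}$.

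Next I would construct the global objects on $\mathcal{S}^{(2)}$ by the sewing iteration: starting from $\omega_a$ on each $\hat{\mathcal{S}}_a$, one adds corrections so that the result extends holomorphically across the identified annulus $\mathcal{A}_1\simeq\mathcal{A}_2$. Each pass through the annulus contributes one factor $A_1$ or $A_2$, and geometric summation of the corrections produces the resolvent $(I-A_1A_2)^{-1}=I+\sum_{n\ge1}(A_1A_2)^n$. Theorem~\ref{g2_thmA1A2}(a) guarantees that this series converges and is holomorphic on $\mathcal{D}^\epsilon$, which is where the holomorphy of $\Omega$ comes from. The normalized holomorphic forms $\nu_i$ are then obtained from this glued bidifferential by integrating over an $a$-cycle; since the second-kind corrections carry vanishing $a$-period, the normalization $\oint_{a_j}\nu_i=2\pi i\,\delta_{ij}$ is automatic.

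Finally I would compute the $b$-periods using the Riemann bilinear relations, which pair the glued second-kind differential against the holomorphic form $dz_a$. In the mode basis above the holomorphic differential sits in the $k=1$ slot, and the $b$-period pairing likewise extracts the $k=1$ component, so only the $(1,1)$-entries of the resolvents and of the propagators $A_{\bar a}$ survive; carrying the constants through yields $\Omega_{aa}=\tau_a+\frac{\epsilon}{2\pi i}\big(A_{\bar a}(I-A_aA_{\bar a})^{-1}\big)(1,1)$ and $\Omega_{12}=-\frac{\epsilon}{2\pi i}(I-A_1A_2)^{-1}(1,1)$, matching the claim. The main obstacle is exactly this bookkeeping: one must set conventions so that the sewing iteration reproduces the precise entries $A_a(k,l)$, including the factor $1/\sqrt{kl}$ and the sign $(-1)^{l+1}$, and verify that the bilinear-relation evaluation of the $b$-periods genuinely isolates the $(1,1)$-entry. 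By contrast, existence and holomorphy on $\mathcal{D}^\epsilon$ require no extra work once Theorem~\ref{g2_thmA1A2} is available.
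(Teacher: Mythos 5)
A preliminary remark: the paper itself contains no proof of this statement. Theorem~\ref{g2_thm_period_eps} is quoted from \cite{MT1} as part of the review of the sewing formalism, so the only meaningful comparison is with the proof given there. Your overall strategy does coincide with that proof: Yamada's variational method, mode expansions of the torus data on the sewing annulus, an iteration through the identification $z_1z_2=\epsilon$ whose geometric summation produces the resolvent $(I-A_1A_2)^{-1}$, with convergence and holomorphy supplied by Theorem~\ref{g2_thmA1A2}. So the route is the right one, and your identification of $A_a(k,l)$ as the annulus ``propagator'' built from the Eisenstein coefficients of the torus bidifferential is exactly how these matrices arise in \cite{MT1}.

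Nevertheless, as a proof the proposal has a genuine gap, which you partly acknowledge yourself. Everything that makes the statement non-trivial --- why precisely the $(1,1)$-entries survive, the prefactor $\frac{\epsilon}{2\pi i}$, the absence of any $O(1)$ term in $\Omega_{12}$, and its minus sign --- is asserted rather than derived; your closing admission that ``the main obstacle is exactly this bookkeeping'' concedes the substance of the theorem. In \cite{MT1} that bookkeeping \emph{is} the proof: one writes $\nu_i$ as $dz_i$ plus an undetermined mode expansion in the annulus, imposes holomorphic continuation across $z_1z_2=\epsilon$, solves the resulting infinite linear system for the coefficients (this is where $(I-A_1A_2)^{-1}$ and the factors $\epsilon^{(k+l)/2}/\sqrt{kl}$ actually enter), and then computes the $b$-periods term by term, at which point the period integrals on each punctured torus annihilate all but the degree-one mode, isolating the $(1,1)$-entry. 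None of this appears in your sketch. There is also a concrete error: you propose to obtain the $\nu_i$ from the glued bidifferential ``by integrating over an $a$-cycle,'' with the normalization $\oint_{a_j}\nu_i=2\pi i\,\delta_{ij}$ then ``automatic.'' For the normalized bidifferential the $a$-periods vanish identically --- that vanishing is its defining normalization --- and it is the $b$-periods $\oint_{b_j}\omega(x,\cdot)$ that return (a multiple of) the holomorphic forms, so your construction as written yields $0$, not $\nu_i$; in \cite{MT1} the forms $\nu_i$ are in any case constructed directly by the sewing recursion applied to $dz_a$, not extracted from $\omega(x,y)$. Relatedly, your appeal to the Riemann bilinear relations is precisely the step that must produce the sign and the purely $O(\epsilon)$ structure of $\Omega_{12}$, and nothing in the sketch forces either.
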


The sewing scheme described above provides, by definition, a description of the degeneration of the Riemann surface to two tori given by the limit $\epsilon\rightarrow 0$ for fixed $q_1,q_2$. 
Of central interest in this paper is the degeneration to a torus where one of the sewn tori is pinched down to a Riemann sphere \cite{F}. 
In particular, pinching down the right torus corresponds to the limit $q_2 \rightarrow 0$ (or  equivalently $\tau_2 \rightarrow i\infty$) for fixed $q_1, \epsilon$ for which $A_{2} \rightarrow A_{2}(0) $ with 
\begin{equation}
\label{A20}
A_{2}(0) = \left(\frac{(-1)^{l}\epsilon ^{(k+l)/2}B_{k+l}}{\sqrt{kl}(k+l)(k-1)!(l-1)!}\right), 
\end{equation} 
for $k,l\geq 1$ and $B_{k+l}$ is the $(k+l)^{th}$ Bernoulli number  \eqref{bernoullidef}. The modular parameter on the degenerate torus, which we denote by $\tau$, is determined by \cite{F}
\begin{align}
\tau &=\lim_{q_2 \rightarrow 0}\Omega_{11}(q_{1},q_{2},\epsilon ) = \tau _{1}+\frac{\epsilon}{2\pi i}
\left (A_{2}(0)(I-A_{1}A_{2}(0))^{-1}\right)(1,1) \notag \\
&=
\tau_{{1}}+\frac{1}{2\pi   i}\left(-\frac{1}{12} {\epsilon}^{2}
+{\frac {1}{144}} {\epsilon}^{4}E_{{2}} \left( q_{{1}} \right) 
 \right)+O\left({\epsilon}^{6} \right).
\label{eq:tau}
\end{align}
There is a similar formula for the modular parameter of the degenerate torus corresponding to the limit  $q_1 \rightarrow 0$ for fixed $q_2, \epsilon$.

\subsection{The Genus Two Partition Function}
Let $V$ be a simple, self-dual VOA of CFT-type with a unique invertible Li-Zamolodchikov metric. We define the genus two partition function
associated with the above sewing scheme by \cite{T,MT2}
\begin{equation}
\label{ZV2}
Z_V^{(2)}(q_1,q_2,\epsilon) 
= \sum_{n\geq 0} \epsilon^n\sum_{u\in V_{[n]}}Z_V^{(1)}(u,q_1)Z_V^{(1)}(\overline{u},q_2),
\end{equation}
where $u$ ranges over any $V_{[n]}$ basis, $Z_V^{(1)}(u,q_1)$ and $Z_V^{(1)}(\overline{u},q_2)$ are genus one 1-point functions,  and $\overline{u}$ is the dual of $u$ with respect to the square bracket Li-Zamolodchikov metric $\langle\, ,\rangle_{[\,]}$.
 We may similarly define a genus two partition function associated with any pair of $V$-modules $N_1$ and $N_2$:
\begin{equation}
\label{ZN1N2}
Z_{N_1,N_2}^{(2)}(q_1,q_2,\epsilon) = \sum_{n\geq 0} \epsilon^n\sum_{u\in V_{[n]}}Z_{N_1}^{(1)}(u,q_1)Z_{N_2}^{(1)}(\overline{u},q_2).
\end{equation}
The genus two partition function for the rank 1 Heisenberg VOA $M$ is described in detail in \cite{MT2}:
\begin{theorem}\label{g2_heisenthm}\cite{MT2}
For the rank 1 Heisenberg VOA $M$
\begin{equation}\label{g2_heisen}
Z_{M}^{(2)}(q_{1},q_{2},\epsilon )=\frac{1}{\eta(q_1)\eta(q_2)}\det(I-A_1 A_2)^{-1/2},
\end{equation}
for the Dedekind eta function $\eta(q)$. $Z_{M}^{(2)}$ is holomorphic on $\cal{D}^\epsilon$.
\end{theorem}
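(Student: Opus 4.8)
The plan is to evaluate the defining series \eqref{ZV2} directly for $M$ by choosing an explicit $\langle\,,\rangle_{[\,]}$-orthogonal basis of each square-bracket homogeneous space $V_{[n]}$ and computing the genus one $1$-point functions combinatorially. First I would fix the basis. Writing $h\in M_1$ for the weight-one Heisenberg generator and $h[n]$ for its square-bracket modes, $M$ is spanned by the monomials $h[-k_1]\cdots h[-k_m]\vv$ with $k_1\ge\cdots\ge k_m\ge 1$. Since the square-bracket modes obey the same Heisenberg relations $[h[k],h[l]]=k\,\delta_{k+l,0}$, the metric $\langle\,,\rangle_{[\,]}$ is diagonal on these monomials, so the dual vector $\overline{u}$ of a monomial is an explicit scalar multiple of itself. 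This turns the inner sum $\sum_{u\in V_{[n]}}$ in \eqref{ZV2} into a sum over such monomials with known normalizations.

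Second, I would evaluate $Z_M^{(1)}(u,q)$ for a monomial $u$. Expressing $o(u)$ as a product of Heisenberg modes and taking the trace over the Fock space against $q^{L_0-C/24}$, a finite-temperature Wick theorem rewrites the trace as a sum over complete pairings of the modes, where each paired index set $(k_i,k_j)$ contributes a self-contraction proportional to the Eisenstein series $E_{k_i+k_j}(q)$ — precisely the data appearing in the entries $A_a(k,l,q_a,\epsilon)$ up to the $\epsilon$-dependent normalization. The partition-function prefactor $Z_M^{(1)}(q)=1/\eta(q)$ factors out, and the $1$-point function vanishes unless $m$ is even. (Alternatively one can quote the explicit Heisenberg $1$-point formulas together with the Zhu recursion of Theorem~\ref{thm:Zhured}.)

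Third, I would assemble the double sum. Substituting these evaluations into \eqref{ZV2}, each term is labelled by a monomial on each torus together with a choice of internal pairings, and the metric pairing implicit in $\overline{u}$ glues the external legs of the two tori across the sewing region. Organizing the $\epsilon$-powers and the $\sqrt{kl}\,$/factorial normalizations, the total collapses into a sum over closed graphs (necklaces) that alternate between contractions on torus $1$ (contributing factors of $A_1$) and torus $2$ (contributing factors of $A_2$). A single connected necklace built from $n$ blocks contributes $\tfrac{1}{2n}\Tr(A_1A_2)^n$, the $1/n$ from cyclic symmetry and the extra $1/2$ from the reflection symmetry of the single real boson, so summing over all disjoint unions of necklaces exponentiates to $\frac{1}{\eta(q_1)\eta(q_2)}\exp\!\big(\tfrac12\sum_{n\ge1}\tfrac1n\Tr(A_1A_2)^n\big)$. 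By Theorem~\ref{g2_thmA1A2}(b) the exponent equals $-\tfrac12\log\det(I-A_1A_2)$, which yields \eqref{g2_heisen}; the same theorem supplies convergence and non-vanishing on $\mc{D}^\epsilon$, hence holomorphy.

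The main obstacle is the combinatorial reorganization of the third step: one must match the monomial normalizations, the dual-basis factors from $\langle\,,\rangle_{[\,]}$, and the graph symmetry factors so that the identity ``connected necklace $=\tfrac{1}{2n}\Tr(A_1A_2)^n$'' holds exactly, and then justify the exponentiation — equivalently, the passage from the sum over all Wick pairings to the square-root-of-a-determinant form. By comparison, the analytic assertions of convergence and holomorphy on $\mc{D}^\epsilon$ are routine once Theorem~\ref{g2_thmA1A2} is in hand.
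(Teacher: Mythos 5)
The paper states this theorem without proof, quoting it from \cite{MT2}, and your proposal reconstructs essentially the argument given in that source: diagonalize the square-bracket Li--Zamolodchikov metric on the Fock monomial basis, evaluate the genus one $1$-point functions as sums over pairings weighted by Eisenstein series, and reorganize the double sum of \eqref{ZV2} as a sum over disjoint unions of alternating (chequered) necklaces, where each connected necklace of $n$ blocks carries the dihedral symmetry factor $\frac{1}{2n}$ and contributes $\frac{1}{2n}\Tr\left((A_1A_2)^n\right)$, so that the sum exponentiates to $\det(I-A_1A_2)^{-1/2}$ with convergence and holomorphy on $\mathcal{D}^{\epsilon}$ supplied by Theorem~\ref{g2_thmA1A2}. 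Your outline is correct, and you rightly identify where the real work lies --- matching the metric normalizations and Fock-space sign factors against the $(-1)^{l+1}/\sqrt{kl}$ normalization of $A_a$, and justifying the series rearrangement --- which is precisely what \cite{MT2} carries out in detail.
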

This result is generalized to a central charge $r$  Heisenberg VOA $M^r$ with two irreducible $M^r$-modules
$N^{r}_{\boldsymbol{\alpha}}=N_{\alpha_{1}}\otimes \ldots \otimes N_{\alpha_{r}}$ and $N^r_{\boldsymbol{\beta}}=N_{\beta_{1}}\otimes \ldots \otimes N_{\beta_{r}}$ (for irreducible $M$-modules $N_{\alpha_{i}},N_{\beta{i}}$) as follows. Defining $Z_{\boldsymbol{\alpha},\boldsymbol{\beta}}^{(2)}(q_{1},q_{2},\epsilon )
=Z_{N^{r}_{\boldsymbol{\alpha}},N^{r}_{\boldsymbol{\beta}}}^{(2)}(q_1,q_2,\epsilon) $
we obtain
\begin{theorem}\label{g2_heisenmodulethm}\cite{MT2}
\begin{equation}\label{g2_heisenmodule}
Z_{\boldsymbol{\alpha},\boldsymbol{\beta}}^{(2)}(q_{1},q_{2},\epsilon )=\left(Z_{M}^{(2)}(q_{1},q_{2},\epsilon )\right)^r
e^{i\pi (\boldsymbol{\alpha}.\boldsymbol{\alpha}\,\Omega_{11}+\boldsymbol{\beta}.\boldsymbol{\beta}\,\Omega_{22}+2\boldsymbol{\alpha}.\boldsymbol{\beta}\,\Omega_{12})},
\end{equation}
where $\boldsymbol{\alpha}.\boldsymbol{\beta}=\alpha_1\beta_1+\ldots +\alpha_r\beta_r$ etc.
\end{theorem}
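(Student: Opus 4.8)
The plan is to reduce the rank $r$ statement to the rank $1$ case by exploiting the multiplicative behaviour of the genus two partition function under tensor products, and then to compute the single-pair quantity $Z^{(2)}_{N_{\alpha},N_{\beta}}$ directly from the defining sum \eqref{ZN1N2}.

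First I would establish multiplicativity. Since $M^r=M^{\otimes r}$ and $N^r_{\boldsymbol{\alpha}}=N_{\alpha_1}\otimes\cdots\otimes N_{\alpha_r}$, a basis of $(M^r)_{[n]}$ may be chosen as tensor products $u=u_1\otimes\cdots\otimes u_r$ with $u_i\in M_{[n_i]}$ and $n_1+\cdots+n_r=n$; the square bracket Li--Zamolodchikov metric on a tensor product VOA is the tensor product of the factor metrics, so the dual factorizes as $\overline{u}=\overline{u_1}\otimes\cdots\otimes\overline{u_r}$. Because $L_0=\sum_i L_0^{(i)}$ is additive and vertex operators act componentwise, $o(u)=o(u_1)\otimes\cdots\otimes o(u_r)$ on homogeneous tensors, and the trace defining the $1$-point function factorizes, $Z^{(1)}_{N^r_{\boldsymbol{\alpha}}}(u,q)=\prod_i Z^{(1)}_{N_{\alpha_i}}(u_i,q)$. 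Substituting into \eqref{ZN1N2} and writing $\epsilon^n=\prod_i\epsilon^{n_i}$ turns the constrained sum into an unconstrained product, giving $Z^{(2)}_{N^r_{\boldsymbol{\alpha}},N^r_{\boldsymbol{\beta}}}=\prod_{i=1}^r Z^{(2)}_{N_{\alpha_i},N_{\beta_i}}$. This reduces the theorem to the rank $1$ identity $Z^{(2)}_{N_{\alpha},N_{\beta}}=Z^{(2)}_{M}\,e^{i\pi(\alpha^2\Omega_{11}+\beta^2\Omega_{22}+2\alpha\beta\Omega_{12})}$, since $\prod_i Z^{(2)}_{M}=(Z^{(2)}_{M})^r$ and the product of the scalar exponentials assembles the quadratic forms into the bilinear forms $\boldsymbol{\alpha}.\boldsymbol{\alpha}$, $\boldsymbol{\beta}.\boldsymbol{\beta}$ and $\boldsymbol{\alpha}.\boldsymbol{\beta}$ appearing in \eqref{g2_heisenmodule}.

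For the rank $1$ computation I would use the basis of $M_{[n]}$ given by square bracket Heisenberg monomials $h[-k_1]\cdots h[-k_m]\vv$ in the weight one generator $h$, and evaluate the $1$-point functions by the bosonic Wick formula already used to prove Theorem~\ref{g2_heisenthm}. The only new feature on the module $N_{\alpha}=M\otimes e^{\alpha}$ is the nonzero charge carried by the highest weight vector $e^{\alpha}$, on which the zero mode of $h$ acts as multiplication by $\alpha$ (and by $\beta$ on the second torus). Thus, in addition to the pairwise contractions among the $h$'s that reproduce the vacuum module result, each $1$-point function acquires extra terms in which some generators contract against the background charge. Organising the full $\epsilon$-sum \eqref{ZN1N2} diagrammatically, these charge contributions assemble into chains of the propagator matrices $A_1,A_2$ running between the two external charges across the sewn annuli; the sum over the number of links is geometric and exponentiates, with convergence on $\mathcal{D}^{\epsilon}$ guaranteed by Theorem~\ref{g2_thmA1A2}.

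The main obstacle is to identify the resulting exponent with the period matrix quadratic form. Concretely, the resummed chains produce the $(1,1)$ entries of $A_2(I-A_1A_2)^{-1}$, $A_1(I-A_2A_1)^{-1}$ and $(I-A_1A_2)^{-1}$, while the single torus factors $Z^{(1)}_{N_{\alpha}}(q_1)=q_1^{\alpha^2/2}/\eta(q_1)$ and $Z^{(1)}_{N_{\beta}}(q_2)=q_2^{\beta^2/2}/\eta(q_2)$ supply diagonal pieces $q_a^{\gamma^2/2}=e^{i\pi\tau_a\gamma^2}$. By Theorem~\ref{g2_thm_period_eps} the resummed entries are exactly $\Omega_{11}-\tau_1$, $\Omega_{22}-\tau_2$ and $\Omega_{12}$, so that combining them with the $\tau_1,\tau_2$ contributions yields precisely $e^{i\pi(\alpha^2\Omega_{11}+\beta^2\Omega_{22}+2\alpha\beta\Omega_{12})}$ multiplying $Z^{(2)}_{M}$, as required. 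The delicate point is keeping careful track of the combinatorial weights and $\sqrt{kl}$ normalisations in $A_a$ so that the charge chains resum to the period matrix entries with the correct numerical factor, rather than merely to something proportional to them.
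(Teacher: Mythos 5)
This theorem is quoted in the paper from \cite{MT2} without proof, so there is no internal proof to compare against; the relevant benchmark is the argument of \cite{MT2} itself. Your proposal is sound and follows essentially that argument: the reduction of the rank $r$ case to rank $1$ by multiplicativity of $1$-point functions and of the Li--Zamolodchikov metric under tensor products, followed by the bosonic contraction expansion in which the charge-endpoint chains of the moment matrices $A_1,A_2$ resum geometrically to the $(1,1)$ entries of $A_2(I-A_1A_2)^{-1}$, $A_1(I-A_2A_1)^{-1}$ and $(I-A_1A_2)^{-1}$, identified with $\Omega_{11}-\tau_1$, $\Omega_{22}-\tau_2$ and $\Omega_{12}$ via Theorem~\ref{g2_thm_period_eps}, with convergence from Theorem~\ref{g2_thmA1A2} --- this is precisely the ``chequered necklace'' computation of \cite{MT2}, and you correctly flag the combinatorial normalisation of the chains as the only delicate point.
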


\medskip

\subsection{The Genus One Degeneration of the Genus Two Partition Function}
\label{sec:degen} 
Of particular interest in this paper is the behaviour of $Z_V^{(2)}(q_1,q_2,\epsilon)$ in the degeneration limit $q_2\rightarrow 0$ where the surface degenerates to a torus with modular parameter $\tau$ of \eqref{eq:tau}. Naively, one might expect that $q_2^{C/24}Z_V^{(2)}(q_1,q_2,\epsilon)\rightarrow Z_V^{(1)}(q)$ for $q=e^{2\pi i \tau}$. However, this is not the case as the following result shows:
\begin{proposition}
For the Heisenberg VOA $M$, the genus one degeneration of the genus two partition function is given by
\begin{equation}
\lim_{q_2\rightarrow 0} q_2^{1/24}Z_M^{(2)}(q_1,q_2,\epsilon)= Z_M^{(1)}(q)\left[
1+{\frac {1}{576}} E_4(q_1) {\epsilon}^{4}+O\left(\epsilon^6\right) \right].
\end{equation}
\end{proposition}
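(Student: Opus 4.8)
The plan is to start from the closed form for $Z_M^{(2)}$ in Theorem~\ref{g2_heisenthm} and extract the $q_2\to 0$ limit directly. Writing $\eta(q_2)=q_2^{1/24}\prod_{n\ge 1}(1-q_2^n)$, the factor $q_2^{1/24}/\eta(q_2)=\prod_{n\ge 1}(1-q_2^n)^{-1}\to 1$ as $q_2\to 0$, while $A_2\to A_2(0)$ as in \eqref{A20}. Granting that $\det(I-A_1A_2(0))^{-1/2}$ is well defined through the convergent series of Theorem~\ref{g2_thmA1A2}, this gives
\[
\lim_{q_2\to 0} q_2^{1/24}Z_M^{(2)}(q_1,q_2,\epsilon)=\frac{1}{\eta(q_1)}\det\bigl(I-A_1A_2(0)\bigr)^{-1/2}.
\]
Since $Z_M^{(1)}(q)=1/\eta(q)$, the claim is equivalent to showing
\[
\frac{\eta(q)}{\eta(q_1)}\det\bigl(I-A_1A_2(0)\bigr)^{-1/2}=1+\tfrac{1}{576}E_4(q_1)\epsilon^4+O(\epsilon^6),
\]
which I would establish by expanding each factor in $\epsilon$ to order $\epsilon^4$ and combining their logarithms.

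For the eta ratio I would set $\tau=\tau_1+\delta$ with $\delta$ read off from \eqref{eq:tau}, and integrate $\tfrac{d}{d\tau}\log\eta=-\pi i E_2$, which follows from \eqref{deleta}. Taylor expanding to second order in $\delta$ and using $\tfrac{d}{d\tau}E_2=2\pi i(5E_4-E_2^2)$ from \eqref{delE2}, I expect
\[
\log\frac{\eta(q)}{\eta(q_1)}=\tfrac{1}{24}E_2(q_1)\epsilon^2+\Bigl(-\tfrac{1}{576}E_2(q_1)^2-\tfrac{5}{576}E_4(q_1)\Bigr)\epsilon^4+O(\epsilon^6).
\]
For the determinant I would use Theorem~\ref{g2_thmA1A2}(b) in the form $\log\det(I-A_1A_2(0))^{-1/2}=\tfrac12\sum_{n\ge 1}\tfrac1n\Tr(A_1A_2(0))^n$ and evaluate the entries $A_1(k,l)$ and $A_2(0)(l,k)$ for the finitely many pairs with $k+l\le 4$. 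This should yield $\Tr(A_1A_2(0))=-\tfrac{1}{12}E_2(q_1)\epsilon^2+\tfrac{1}{48}E_4(q_1)\epsilon^4+O(\epsilon^6)$ and $\Tr(A_1A_2(0))^2=\tfrac{1}{144}E_2(q_1)^2\epsilon^4+O(\epsilon^6)$, hence
\[
\log\det\bigl(I-A_1A_2(0)\bigr)^{-1/2}=-\tfrac{1}{24}E_2(q_1)\epsilon^2+\Bigl(\tfrac{1}{96}E_4(q_1)+\tfrac{1}{576}E_2(q_1)^2\Bigr)\epsilon^4+O(\epsilon^6).
\]

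Adding the two logarithms, the $\epsilon^2$ terms (both proportional to $E_2$) cancel, and at order $\epsilon^4$ the two $E_2^2$ contributions cancel while the $E_4$ coefficients combine as $-\tfrac{5}{576}+\tfrac{1}{96}=\tfrac{1}{576}$; exponentiating then produces the stated factor. The main obstacle is precisely this cancellation: the naive guess fails because both the shift $\tau-\tau_1$ of \eqref{eq:tau} and the determinant carry $E_2$-dependence, and one must track the $E_2$ terms generated by $\tfrac{d}{d\tau}E_2=2\pi i(5E_4-E_2^2)$ against those from the quadratic term in $\delta$ to see that only the $E_4$ piece survives. A secondary bookkeeping point is that the $\epsilon^4$ contribution to the determinant comes both from $\tfrac12\Tr(A_1A_2(0))^2$ through the squared $(1,1)$-entry and from the $\epsilon^4$ part of $\tfrac12\Tr(A_1A_2(0))$ through the pairs $(2,2),(1,3),(3,1)$, so the individual Bernoulli and Eisenstein coefficients must be assembled correctly before the cancellation becomes visible.
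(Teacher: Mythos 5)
Your proposal is correct and takes essentially the same route as the paper: both pass to the $q_2\to 0$ limit of the closed form \eqref{g2_heisen} to obtain $\det(I-A_1A_2(0))^{-1/2}/\eta(q_1)$, then expand $\eta(q)$ about $\tau_1$ using \eqref{eq:tau}, \eqref{deleta}, \eqref{delE2} and expand the determinant to order $\epsilon^4$, and all of your intermediate coefficients (e.g.\ $\Tr(A_1A_2(0))=-\tfrac{1}{12}E_2(q_1)\epsilon^2+\tfrac{1}{48}E_4(q_1)\epsilon^4+O(\epsilon^6)$ and the two log-expansions) reproduce the paper's displayed series upon exponentiation. Your use of logarithms and the explicit trace--log evaluation of the determinant via Theorem~\ref{g2_thmA1A2}(b) merely makes explicit a computation the paper states without detail, so the difference is presentational only.
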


\begin{proof}
Using \eqref{delE2}, \eqref{deleta} and \eqref{eq:tau} we have
\begin{align*}
\eta(q)&= \eta(q_1) + (\tau-\tau_1)\partial_{\tau_1}\eta(q_1)+ \frac{1}{2}(\tau-\tau_1)^2\partial^2_{\tau_1}\eta(q_1)+O\left(\epsilon^6\right)\\
&=\eta(q_1)\left[1+\frac {1}{24}E_{2}\left( q_{1}\right)  \epsilon^2
-\left(\frac {1}{1152}E_{2}(q_{1})^{2}+\frac {5}{576}E_{4}\left( q_{1} \right) 
\right)\epsilon^4  +O\left(\epsilon^6\right)\right].
\end{align*} 
so that 
\begin{align*}
Z_M^{(1)}(q)=\frac {1}{\eta(q)}
&=\frac {1}{\eta(q_1)}\left[1-\frac {1}{24}E_{2}\left( q_{1}\right)  \epsilon^2
+\left(\frac {1}{384}E_{2}(q_{1})^{2}+\frac {5}{576}E_{4}\left( q_{1} \right) 
\right)\epsilon^4  +O\left(\epsilon^6\right)\right].
\end{align*}
On the other hand, from \eqref{A20} and \eqref{g2_heisen} we find
\begin{align*}
\lim_{q_2\rightarrow 0} q_2^{1/24}Z_M^{(2)}(q_1,q_2,\epsilon)&=
\frac{1}{\eta(q_1)}\det(I-A_1 A_2(0))^{-1/2}\\
&= \frac{1}{\eta(q_1)}\left[1-\frac {1}{24}E_{2} \left( q_{{1}} \right) \epsilon^2
+ \left( 
{\frac {1}{384}}E_{{2}} ( q_{1})^2
+{\frac {1}{96}}E_{{4}} ( q_{1} ) \right)\epsilon^4+O\left(\epsilon^6\right)\right]\\
&=Z_M^{(1)}(q)\left[
1+{\frac {1}{576}} E_4(q_1) {\epsilon}^{4}+O\left(\epsilon^6\right) \right].
\end{align*}
\end{proof}

A similar result is shown in Proposition~7.5 in \cite{MT3} in the comparison between the Heisenberg genus two partition function $Z_M^{(2)}(q_1,q_2,\epsilon)$ and an alternative partition function  $Z_{M,\rho}^{(2)}(q,w,\rho)$  based on a different sewing scheme where a torus with modular parameter $q$ is self-sewn at punctures with relative position $w$ and with sewing parameter $\rho$  - see \cite{MT3} for the details.
There it is shown that the two partition functions are not equal (on the domains where the two sewing schemes can be compared). 
This apparent incompatibility is consistent with the conformal anomaly in physics where the partition function of a CFT is believed to describe a section of some bundle over moduli space so that the discrepancy is described by a non-trivial transition function e.g. \cite{FS}. Furthermore, it is believed in the physics literature that the transition functions are universal in the sense that the ratio of the partition functions for \textbf{any} two CFTs of a given central charge $C$ has a trivial section over moduli space. In particular, for positive integer $C$  we may compare the partition function of any given CFT to the $C$-dimensional bosonic string.  
Thus, in the language of VOAs, we can formulate the following conjecture \cite{MT3}. 
Let $V$ be a VOA with positive integer central charge $C$ and define genus two normalized partition functions, analogous to \eqref{thetaN}, by 
\begin{equation}
\Theta_V^{(2)}(q_1,q_2,\epsilon) \equiv \frac{Z_V^{(2)}(q_1,q_2,\epsilon) }{\left(Z_M^{(2)}(q_1,q_2,\epsilon)\right)^C},
\qquad 
\Theta_{V,\rho}^{(2)}(q,w,\rho) \equiv \frac{Z_{V,\rho}^{(2)}(q,w,\rho) }{\left(Z_{M,\rho}^{(2)}(q,w,\rho)\right)^C}
\label{Theta2}
\end{equation}
\begin{conjecture}
\label{conj}
$\Theta_V^{(2)}(q_1,q_2,\epsilon)=\Theta_{V,\rho}^{(2)}(q,w,\rho) $ on the domains where the two sewing schemes can be compared.
\end{conjecture}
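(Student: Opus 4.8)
Since this is stated as a conjecture, the plan is to sketch a program of attack rather than a finished argument. The organising idea is the physical expectation that the genus two partition function is, up to a conformal anomaly, a function on the moduli space $\mathbb{H}_{2}/Sp(4,\mathbb{Z})$ of genus two surfaces, and not merely on the sewing coordinates used to present a given surface. Thus I would try to show that in each scheme $Z_V^{(2)}$ factors as a universal anomaly term, depending only on the central charge $C$ and the geometric sewing data, times a scheme-independent function of the conformal class of the surface, and that this anomaly term is exactly the corresponding power $(Z_M^{(2)})^C$ of the Heisenberg partition function. Granting this, $\Theta_V^{(2)}$ and $\Theta_{V,\rho}^{(2)}$ become two coordinate presentations of one and the same function on the overlap of the two images in moduli space, and the conjecture follows.

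The first step is to expose the VOA-dependent content of $Z_V^{(2)}$. The sewing formula \eqref{ZV2} writes it as a sum over a square bracket basis of $V_{[n]}$ of products of genus one 1-point functions, and for the subspace of Virasoro vacuum descendents Corollary~\ref{g1_maintheoremcorol} shows that each such 1-point function is a universal, VOA- and $C$-independent quasi-modular combination of derivatives of the normalized partition function $\zh_N^{(1)}$. The model case is the Heisenberg VOA, where by Theorem~\ref{g2_heisenthm} the whole sum collapses to $\det(I-A_1A_2)^{-1/2}/(\eta(q_1)\eta(q_2))$, which is precisely the genus two free boson partition function realising the $C=1$ conformal anomaly; Theorem~\ref{g2_heisenmodulethm} shows that the module partition functions differ from this only by the geometric factor $\exp(i\pi(\ldots\Omega\ldots))$. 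The goal of this step is to reorganise the general sum so that its only genuinely $V$-dependent inputs transform identically under a change of scheme.

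The second and decisive step is to prove the transformation law: that passing from the two-tori coordinates to the self-sewn-torus coordinates multiplies $Z_V^{(2)}$ by the $C$-th power of the factor by which it multiplies $Z_M^{(2)}$. This is where the main obstacle lies. The change of scheme is a nontrivial biholomorphism between overlapping coordinate patches on moduli space, and one must control the behaviour of the infinite sum \eqref{ZV2} under it. A clean argument would require a coordinate-free characterisation of $Z_V^{(2)}$ as a section of the $C$-th tensor power of the line bundle over moduli space whose defining section in each scheme is $Z_M^{(2)}$; equivalently, one must establish a rigidity statement to the effect that the genus two anomaly of an arbitrary VOA is $C$ times that of a single free boson, with no $V$-dependent corrections. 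Deriving such rigidity directly from the sewing definition, without additional geometric input, seems to be the crux of the difficulty.

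A more modest but tractable route is to accumulate evidence by verifying the conjecture order by order in the sewing parameters, extending the low order computation carried out for $M$ in the Proposition above and using the explicit expansions \eqref{eq:tau} and \eqref{g2_heisen}. The torus degeneration limit studied in this paper already supplies one such consistency check: in the limit $q_2\rightarrow 0$ the dual vectors $\overline{u}$ force the relevant contributions into the Virasoro vacuum descendent sector governed by the $\tl^{[n]}$, so that $\Theta_V^{(2)}$ reduces to the normalized genus one partition function $\zh_V^{(1)}$ up to the universal factor of the main theorem, and the analogous degeneration of $\Theta_{V,\rho}^{(2)}$ must reduce to the same object. A perturbative match to all computed orders, combined with a holomorphicity and rigidity argument bounding the admissible anomaly cocycles, would be the realistic path to a full proof; in the absence of such a rigidity input, these calculations confirm the conjecture rather than establish it.
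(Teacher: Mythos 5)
Your assessment is accurate and mirrors the paper's own treatment exactly: the statement is an open conjecture that the paper does not prove, motivating it instead by the Friedan--Shenker picture of the partition function as a section of a bundle over moduli space with universal transition functions, and supporting it only by the lattice VOA case ($\Theta_V^{(2)}=\Theta_L^{(2)}(\Omega)$ in both schemes) and the torus-degeneration consistency check of Theorem~\ref{g2_mainresult}. Your proposed rigidity program, and your observation that the $q_2\rightarrow 0$ limit collapses the sum onto the Virasoro vacuum descendents $\tl^{[n]}$ so that $\Theta_V^{(2)}$ degenerates to $\Theta_V^{(1)}(q)$ in agreement with the $\rho\rightarrow 0$ limit of the other scheme, coincide with the paper's stance, and you are right that this constitutes evidence rather than a proof.
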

For example, for an even  lattice VOA $V_L$, one finds $\Theta_V^{(2)}=\Theta_L^{(2)}(\Omega)$, the genus two Siegel lattice theta series, in both  sewing schemes \cite{MT2,MT3}.
\medskip

We now prove a non-trivial consistency condition related to the  torus degeneration limit $q_2\rightarrow 0$ of Conjecture~\ref{conj}. This corresponds to $\rho\rightarrow 0$ in the alternative sewing procedure where  $ \Theta_{V,\rho}^{(2)}(q,w,\rho) \rightarrow \Theta_V^{(1)}(q)$, the genus one normalized partition function.   Thus we wish to show:
\begin{theorem}\label{g2_mainresult}
Let $V$ be a VOA of  central charge $C$ with normalized genus two partition function $\Theta_V^{(2)}(q_1,q_2,\epsilon)$ which is convergent on a neighborhood of the two tori degeneration point $\epsilon=0$. Then under the torus degeneration
\begin{align*}
\lim_{q_2\rightarrow 0} \Theta_V^{(2)}(q_1,q_2,\epsilon)=  \Theta_V^{(1)}(q),
\end{align*}
where $q$ is the modular parameter for the degenerate torus.
\end{theorem}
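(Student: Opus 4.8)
The plan is to evaluate the two factors of $\Theta_V^{(2)}=Z_V^{(2)}/(Z_M^{(2)})^C$ separately as $q_2\to 0$ and then to recognise the outcome as the Taylor expansion of the target $\Theta_V^{(1)}(q)=\zh_V^{(1)}(q)=\eta(q)^{C}Z_V^{(1)}(q)$ about $\tau_1$. First I would isolate the low-temperature behaviour of the right-hand factor in \eqref{ZV2}. Writing $Z_V^{(1)}(\overline u,q_2)=q_2^{-C/24}\sum_{m\ge 0}q_2^{m}\Tr_{V_m}(o(\overline u))$ and using that $V$ is of CFT-type, so $V_0=\mathbb{C}\vv$ with $\langle\vv,\vv\rangle=1$, only the $m=0$ term survives the limit, giving $\langle\vv,o(\overline u)\vv\rangle$. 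Interchanging the $q_2$-limit with the (finite, per $\epsilon$-order) basis sum and with the $\epsilon$-sum — legitimate on the stated neighbourhood of $\epsilon=0$ — and applying Lemma~\ref{g2_kqbrackisolemma} to $\overline u\in V_{[n]}$ followed by symmetry and duality of $\langle\,,\rangle_{[\,]}$ to collapse $\sum_{u}Z_V^{(1)}(u,q_1)\langle\tl^{[n]},\overline u\rangle_{[\,]}=Z_V^{(1)}(\tl^{[n]},q_1)$, I obtain
\[
\Phi_V(q_1,\epsilon):=\lim_{q_2\to 0}q_2^{C/24}Z_V^{(2)}(q_1,q_2,\epsilon)=\sum_{n\ge 0}\epsilon^{n}Z_V^{(1)}(\tl^{[n]},q_1).
\]
The same computation with $C=1$ gives the Heisenberg limit $\Phi_M$, and since $\lim_{q_2\to 0}q_2^{C/24}(Z_M^{(2)})^{C}=\Phi_M^{\,C}$, the normalisation cancels the $q_2^{-C/24}$ singularity, leaving $\lim_{q_2\to 0}\Theta_V^{(2)}=\Phi_V/\Phi_M^{\,C}$.

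Next I would put both sides into a common form. Each $\tl^{[n]}=T\lambda^{(n)}$ is a square-bracket Virasoro descendant of the vacuum of weight $n$, so Corollary~\ref{g1_maintheoremcorol} expresses $Z_V^{(1)}(\tl^{[n]},q_1)=\eta(q_1)^{-C}\sum_i(\sum_j G^{n}_{ij}(q_1)C^{j})\,\qd^{i}\zh_V^{(1)}(q_1)$ with $V$-independent quasi-modular coefficients; summing against $\epsilon^{n}$ gives
\[
\Phi_V=\frac{1}{\eta(q_1)^{C}}\sum_{i\ge 0}P_i(q_1,\epsilon,C)\,\qd^{i}\zh_V^{(1)}(q_1),
\]
with $P_i$ independent of $V$. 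For $M$ one has $\zh_M^{(1)}\equiv 1$, so all $\qd^{i}\zh_M^{(1)}$ with $i\ge 1$ vanish and $\Phi_M=\eta(q_1)^{-1}P_0(q_1,\epsilon,1)$. On the other hand, using $\qd=\frac{1}{2\pi i}\partial_\tau$ and the expansion $\tau-\tau_1=\frac{1}{2\pi i}\delta(q_1,\epsilon)$ read off from \eqref{eq:tau}, I would Taylor expand the target about $\tau_1$:
\[
\Theta_V^{(1)}(q)=\zh_V^{(1)}(q)=\sum_{i\ge 0}\frac{\delta(q_1,\epsilon)^{i}}{i!}\,\qd^{i}\zh_V^{(1)}(q_1).
\]
Combining these, the assertion $\lim_{q_2\to 0}\Theta_V^{(2)}=\Theta_V^{(1)}(q)$ is equivalent, because every $P_i$ and $\delta$ is $V$-independent, to the universal coefficient identities $P_0(q_1,\epsilon,C)=P_0(q_1,\epsilon,1)^{C}$ and $P_i(q_1,\epsilon,C)=\frac{\delta^{i}}{i!}\,P_0(q_1,\epsilon,C)$ for $i\ge 1$.

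I expect the main obstacle to be establishing these identities, since matching coefficients of the $\qd^{i}\zh_V^{(1)}(q_1)$ is justified only through $V$-independence, not through any linear independence of the functions themselves. The route I would take is to exploit that $V$-independence directly by feeding the whole computation test objects whose normalised partition functions are known in closed form and carry enough independent $\qd$-derivatives. The rank-$r$ Heisenberg VOAs $M^{r}$ (central charge $r$, $\zh_{M^{r}}^{(1)}\equiv 1$, $\Theta_{M^{r}}^{(2)}\equiv 1$) force $P_0(\cdot,r)=P_0(\cdot,1)^{r}$ for all positive integers $r$, hence the $C$-multiplicativity as a formal series, the interpolation being unambiguous because $P_0$ is polynomial in $C$ at each order in $\epsilon$. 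The derivative identities for $i\ge 1$ I would pin down using the Heisenberg modules $N_\alpha$, for which $\zh_{N_\alpha}^{(1)}(q)=q^{\alpha^2/2}$ gives $\qd^{i}\zh_{N_\alpha}^{(1)}=(\alpha^2/2)^{i}q^{\alpha^2/2}$, so that varying $\alpha$ separates the distinct values of $i$, while the explicit genus-two module partition function of Theorem~\ref{g2_heisenmodulethm} supplies the left-hand side exactly. The delicate bookkeeping is that for a module the right-hand torus has lowest weight $\alpha^{2}/2\neq 0$, so the correct power of $q_2$ and the module analogue of Lemma~\ref{g2_kqbrackisolemma} must be tracked, whereas the degenerate modular parameter $\tau$ of \eqref{eq:tau}, being purely geometric, is unchanged.

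Alternatively, one may hope to bypass examples by resumming $\sum_{n}\epsilon^{n}Z_V^{(1)}(\tl^{[n]},q_1)$ directly. The vectors $\tl^{[n]}$ descend from the exponential conformal map $\phi(z)=e^{z}-1$ underlying the square-bracket formalism, and the conceptual heart of the theorem is that this $\epsilon$-graded sum realises precisely the shift $\tau_1\mapsto\tau$ of \eqref{eq:tau}, up to the $V$-independent factor $\Phi_M^{\,C}$. Demonstrating that equality cleanly is exactly the universal identity above, so I would keep the example-based verification as the concrete fallback and treat the direct resummation as the structurally illuminating, but more intricate, derivation of the same fact.
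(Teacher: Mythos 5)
Your proposal is correct and is essentially the paper's own argument: the same degeneration lemma via Lemma~\ref{g2_kqbrackisolemma} giving $\sum_n\epsilon^n Z_V^{(1)}(\tl^{[n]},q_1)$, the same reduction through Corollary~\ref{g1_maintheoremcorol} to $V$-independent coefficients polynomial in $C$, the same determination of those coefficients from the rank-$r$ Heisenberg data of Theorem~\ref{g2_heisenmodulethm} with interpolation from integer $r$ to general $C$, and the same Taylor resummation in $\tau-\tau_1$. The only simplification you missed is that the paper takes $\boldsymbol{\beta}=0$ (module $N^r_{\boldsymbol{\alpha}}$ on the $q_1$-torus, $M^r$ itself on the $q_2$-torus), so no module analogue of Lemma~\ref{g2_kqbrackisolemma} is needed and varying $\alpha$ yields your two universal identities in a single stroke, $H_l(q_1,r,\epsilon)=\det\left(I-A_1A_2(0)\right)^{-r/2}\frac{1}{l!}\left(2\pi i(\tau-\tau_1)\right)^l$.
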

\medskip
In order to prove Theorem~\ref{g2_mainresult} we first show the following:
\begin{lemma} 
\label{lem:degen}
Let $V$ be a VOA with central charge $C$ with genus two partition function $Z_V^{(2)}(q_1,q_2,\epsilon)$. Then
\begin{equation}
\lim_{q_2\rightarrow 0} q_2^{C/24} Z_V^{(2)}(q_1,q_2,\epsilon) = \sum_{n\geq 0} \epsilon^nZ_V^{(1)}(\tl^{[n]},q_1)\label{g2_limwithrho1}
\end{equation}
for $\tl^{[n]}\in V_{[n]}$ of \eqref{g2_tldefneqn}. 
\end{lemma}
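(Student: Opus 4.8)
The plan is to insert the prefactor $q_2^{C/24}$ into the defining sum \eqref{ZV2} and take the limit $q_2\to 0$ coefficient-by-coefficient in $\epsilon$. First I would isolate the $q_2$-dependence as
\[
q_2^{C/24}Z_V^{(2)}(q_1,q_2,\epsilon)=\sum_{n\geq 0}\epsilon^n\sum_{u\in V_{[n]}}Z_V^{(1)}(u,q_1)\left(q_2^{C/24}Z_V^{(1)}(\overline{u},q_2)\right).
\]
The inner sum runs over a finite basis of $V_{[n]}$ since $\dim V_{[n]}<\infty$, and from \eqref{Zu_1} the bracketed factor is $q_2^{C/24}Z_V^{(1)}(\overline{u},q_2)=\Tr_V(o(\overline{u})\,q_2^{L_0})$, so multiplication by $q_2^{C/24}$ exactly cancels the $q_2^{-C/24}$ singularity of the genus one $1$-point function.

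Second, I would evaluate the pointwise limit of this regularised trace. Because $V$ is of CFT-type we have $V_0=\mathbb{C}\vv$ and $V_m=0$ for $m<0$, so grading by $L_0$ gives the convergent expansion $\Tr_V(o(\overline{u})q_2^{L_0})=\sum_{k\geq 0}q_2^{\,k}\,\Tr_{V_k}(o(\overline{u}))$, which is regular at $q_2=0$. As $q_2\to 0$ only the $k=0$ term survives; since $o(\overline{u})$ preserves each graded piece by \eqref{ou}, and $V_0$ is spanned by $\vv$ with $\langle\vv,\vv\rangle_{(\,)}=1$, the limit equals $\Tr_{V_0}(o(\overline{u}))=\langle\vv,o(\overline{u})\vv\rangle_{(\,)}$.

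Third, I would translate this back into a genus one $1$-point function via Lemma~\ref{g2_kqbrackisolemma}, which for $\overline{u}\in V_{[n]}$ gives $\langle\vv,o(\overline{u})\vv\rangle_{(\,)}=\langle\tl^{[n]},\overline{u}\rangle_{[\,]}$. The coefficient of $\epsilon^n$ then becomes $\sum_{u\in V_{[n]}}Z_V^{(1)}(u,q_1)\,\langle\tl^{[n]},\overline{u}\rangle_{[\,]}$. As $\{u\}$ and $\{\overline{u}\}$ are dual bases of $V_{[n]}$ with respect to $\langle\,,\rangle_{[\,]}$, the resolution of the identity reads $\tl^{[n]}=\sum_{u}\langle\tl^{[n]},\overline{u}\rangle_{[\,]}\,u$, so linearity of $Z_V^{(1)}$ in its first argument collapses the sum to $Z_V^{(1)}(\tl^{[n]},q_1)$, exactly the claimed coefficient with $\tl^{[n]}$ as in \eqref{g2_tldefneqn}.

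The step I expect to be the main obstacle is justifying the interchange of $\lim_{q_2\to 0}$ with the outer sum over $n$, that is, that the limit of the $\epsilon$-series equals the series of the limits computed above. Passing the limit through the finite inner sums is immediate, and each $\epsilon^n$-coefficient is holomorphic in $q_2$ at the origin by the second step; the real content is uniformity. I would argue from the hypothesis that $Z_V^{(2)}$ exists as a convergent series on the sewing domain $\mathcal{D}^\epsilon$ --- which, since the minimal lattice distance $D(q_2)$ stays bounded away from zero as $\tau_2\to i\infty$, persists into the degeneration region --- to deduce that the $\epsilon$-series converges uniformly for $q_2$ in a neighbourhood of $0$ with $q_1$ fixed and $|\epsilon|$ small. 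Uniform convergence then licenses the term-by-term passage to the limit, yielding $\sum_{n\geq 0}\epsilon^n Z_V^{(1)}(\tl^{[n]},q_1)$ as required.
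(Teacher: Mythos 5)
Your proposal is correct and is essentially the paper's own proof: you take the $q_2\to 0$ limit of $q_2^{C/24}Z_V^{(1)}(\overline{u},q_2)$ via the $L_0$-grading and CFT-type property to get $\langle\vv,o(\overline{u})\vv\rangle_{(\,)}$, convert this to $\langle\tl^{[n]},\overline{u}\rangle_{[\,]}$ by Lemma~\ref{g2_kqbrackisolemma}, and collapse the dual-basis sum to $Z_V^{(1)}(\tl^{[n]},q_1)$, exactly as in the paper (your grading argument just makes explicit the limit computation the paper leaves implicit). The only divergence is your final uniformity step: the paper treats \eqref{g2_limwithrho1} coefficientwise in $\epsilon$, so no interchange of limit and sum is invoked there, and if one did insist on an analytic reading, your inference of uniform convergence near $q_2=0$ from mere convergence on $\mathcal{D}^{\epsilon}$ would need more than you give (pointwise convergence alone does not yield uniformity), though this does not affect the correctness of the coefficientwise statement you actually prove.
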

\begin{proof}
Using Lemma~\ref{g2_kqbrackisolemma} we note that 
\begin{equation*}
\lim_{q_2\rightarrow 0} q_2^{C/24}Z_V^{(1)}(\overline{u},q_2)=\langle\textbf{1},o(\overline{u})\textbf{1}\rangle_{(\,)}=\langle\tl^{[n]},\overline{u}\rangle_{[\,]},
\end{equation*} 
for all $\overline{u}\in V_{[n]}$. It therefore follows from \eqref{ZV2} that
\begin{align*}
\lim_{q_2\rightarrow 0} q_2^{C/24} Z_V^{(2)}(q_1,q_2,\epsilon) 
&= \sum_{n\geq 0} \epsilon^n Z_V^{(1)}\left(\sum_{u\in V_{[n]}}\langle\tl^{[n]},\overline{u}\rangle_{[\,]} u,q_1\right)\notag\\
&= \sum_{n\geq 0} \epsilon^nZ_V^{(1)}(\tl^{[n]},q_1).
\end{align*}
\end{proof}
We now turn to the proof of Theorem~\ref{g2_mainresult}.
\begin{proof}
$\tl^{[n]}$ is a square bracket Virasoro descendent of even weight $\wtr[u]=n$ so that Corollary~\ref{g1_maintheoremcorol} implies
\begin{equation*}
Z_V^{(1)}(\tl^{[n]},q_1) = \frac{1}{\eta^{C}(q_1)}\sum_{i=0}^{\frac{n}{2}}\sum_{j=0}^{\frac{n}{2}-i} H_{ij}^n(q_1)C^j\qd_1^{i}\zh_V^{(1)}(q_1),
\end{equation*}
where   $\qd_1\equiv q_1\frac{\partial}{\partial q_1}=\frac{1}{2\pi i}\frac{\partial}{\partial \tau_1}$ and $H_{ij}^{n}(q_1)$ is a quasi-modular form of weight $n-2i$ independent of the VOA $V$.
(The upper limits in the $i,j$ sums arise from the $L[-2]^{n/2}\vv$ contribution to $ \tl^{[n]}$.) Thus Lemma~\ref{lem:degen} implies that
\begin{equation}
\label{eq:Hi}
\lim_{q_2\rightarrow 0} q_2^{C/24} Z_V^{(2)}(q_1,q_2,\epsilon)  = \frac{1}{\eta^{C}(q_1)}\sum_{l\ge 0}H_l(q_1,C,\epsilon)\qd_1^{l}\zh_V^{(1)}(q_1),
\end{equation}
where 
\begin{equation}
H_l(q_1,C,\epsilon)=\sum_{n\geq 2l} \sum_{j=0}^{\frac{n}{2}-l} H_{lj}^n(q_1)\epsilon^n C^j,
\label{eq:HlC}
\end{equation}
which is independent of the VOA up to the central charge $C$. In particular, we may compute 
$H_l(q_1,C,\epsilon)$ explicitly by considering the rank $r$ Heisenberg VOA genus two partition function \eqref{ZN1N2} for the pair of modules $N_1=N^{r}_{\boldsymbol{\alpha}}$ and $N_2=M^r$ i.e. $\boldsymbol{\beta} = 0$. Then Theorem~\ref{g2_heisenmodulethm} implies
\begin{equation*}
Z^{(2)}_{\boldsymbol{\alpha},0}(q_1,q_2,\epsilon) = \frac{1}{\eta^{r}(q_1)\eta^{r}(q_2)}\det(I-A_1A_2)^{-r/2}e^{i\pi\alpha^2\Omega_{11}(q_1,q_2,\epsilon)},
\end{equation*}
where $\alpha^2 = \boldsymbol{\alpha}.\boldsymbol{\alpha}$. Therefore, recalling that $\tau=\Omega_{11}(q_1,0,\epsilon)$,  we obtain
\begin{align}
\label{eq:Z2al}
\lim_{q_2\rightarrow0}q_2^{r/2}Z^{(2)}_{\boldsymbol{\alpha},0}(q_1,q_2,\epsilon) &= \frac{1}{\eta^{r}(q_1)}\det\left(I-A_1A_2(0)\right)^{-r/2}q^{\alpha^2/2}.
\end{align}
But, as for the proof of Lemma~\ref{lem:degen} and \eqref{eq:Hi}, we also have
\begin{align}
\lim_{q_2\rightarrow0}q_2^{r/2}Z^{(2)}_{\boldsymbol{\alpha},0}(q_1,q_2,\epsilon) &= \sum_{n\ge0}\epsilon^nZ_{N^r_{\boldsymbol{\alpha}}}^{(1)}(\tl^{[n]},q_1)\notag\\
&=\frac{1}{\eta^{r}(q_1)}\sum_{l\ge 0}H_l(q_1,r,\epsilon)\qd_1^{l}\zh_{N^r_{\boldsymbol{\alpha}}}^{(1)}(q_1)\notag\\
&=\frac{1}{\eta^{r}(q_1)}\sum_{l\ge 0}H_l(q_1,r,\epsilon)\left(\frac{\alpha^2}{2}\right)^{l}q_1^{\alpha^2/2},
\label{eq:Z2al2}
\end{align}
since $\zh_{N^r_{\boldsymbol{\alpha}}}^{(1)}(q_1) = q_1^{\alpha^2/2}$.
Comparing \eqref{eq:Z2al} and \eqref{eq:Z2al2} using 
$q^{\alpha^2/2} =e^{\pi i(\tau - \tau_1)\alpha^2}q_1^{\alpha^2/2}$
we conclude that 
\begin{equation}
H_l(q_1,r,\epsilon)=\det\left(I-A_1A_2(0)\right)^{-r/2}\,\frac{1}{l!}\left(2\pi i(\tau - \tau_1)\right)^l,
\label{eq:detHi}
\end{equation}
for all integers $r\ge 1$.
It follows from \eqref{eq:HlC} that  \eqref{eq:detHi}  holds for $H_l(q_1,C,\epsilon)$ with $r$ replaced by a general central charge $C$.
Finally, \eqref{eq:Hi} implies
\begin{align*}
\lim_{q_2\rightarrow 0} q_2^{C/24} Z_V^{(2)}(q_1,q_2,\epsilon)  &= \frac{1}{\eta^{C}(q_1)}\det\left(I-A_1A_2(0)\right)^{-C/2}
\sum_{l\ge 0}\frac{1}{l!}\left(\tau - \tau_1\right)^l\frac{\partial^l}{\partial \tau_1^l}\zh_V^{(1)}(q_1)\\
&=\frac{1}{\eta^{C}(q_1)}\det\left(I-A_1A_2(0)\right)^{-C/2}\zh_V^{(1)}(q),
\end{align*}
by Taylor's Theorem. The result follows on recalling  that
\begin{equation*}
\lim_{q_2\rightarrow 0} q_2^{C/24}\left(Z_M^{(2)}(q_1,q_2,\epsilon)\right)^C=
\frac{1}{\eta^C(q_1)}\det(I-A_1 A_2(0))^{-C/2}.
\end{equation*} 
\end{proof}

\section{Appendix. The Virasoro Vectors $\lambda^{(n)}$}
\label{Sect:App}
Recall from Section \ref{sec:g1_kqbracket} the exponential map
\begin{align*}
\phi(z)&=e^z-1=\exp\left(\sum_{i=0}^\infty \alpha_iz^{i+1}\der{z}\right)z,
\end{align*} 
where $ \alpha_1=\frac{1}{2}, \alpha_2=-\frac{1}{12}, \alpha_3=\frac{1}{48},\ldots$ are specific rational parameters. For the linear operator on $V$ 
\begin{equation*}
T = \exp\left(\sum_{i \ge 0} \alpha_i L_i\right),
\end{equation*}
we define the Virasoro vacuum descendents $\lambda^{(n)}\in V_n$  by
\begin{align*}
\lambda = \sum_{n\ge 0} \lambda^{(n)}=T^\dagger\vv =\exp\left(\sum_{i\ge0}\alpha_i L_{-i}\right)\vv.
\end{align*}
Here we show that
\begin{proposition}
\label{prop:lambda}
The Virasoro vacuum descendent $\lambda^{(n)}=0$ for odd $n$ and for even $n$ is determined by
\begin{displaymath}
\lambda = \ldots \exp\left(\beta_6 L_{-6}\right)\exp\left(\beta_4 L_{-4}\right)\exp\left(\beta_2 L_{-2}\right)\vv,
\end{displaymath}
for specific real parameters $\beta_{k}$ for all positive even $k$:
\begin{center}
\renewcommand{\arraystretch}{2}
\begin{tabular}{|c|c|c|c|c|c|c|c|}
\hline
$\beta_2$ & $\beta_4$ &$\beta_6$ & $\beta_8$ & $\beta_{10}$ & $\beta_{12}$ &$\beta_{14}$ &\ldots\\[0.3em]
\hline
$-\dfrac{1}{12}$ & $-{\dfrac {1}{480}}$ & ${\dfrac {1}{12096}}$ & $-{\dfrac {1}{138240}}$& 
	${\dfrac {1}{2280960}}$ & $-{\dfrac {389}{13586227200}}$&${\dfrac {1}{464486400}} $ &\ldots\\[0.6em]
\hline
\end{tabular}
\end{center}
\end{proposition}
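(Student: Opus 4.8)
The plan is to prove the statement in three stages: first establish the parity property $\lambda^{(n)}=0$ for odd $n$ by a symmetry argument, then strip off the $L_{-1}$ contributions and re-exponentiate into the ordered product of single-mode exponentials, and finally read off the coefficients $\beta_k$.

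For the parity I would exploit the grading automorphism $\theta=(-1)^{L_0}$, which fixes $\vv$ and satisfies $\theta L_{-i}\theta^{-1}=(-1)^i L_{-i}$, so that $\theta\lambda=\exp\!\big(\sum_{i\ge 1}(-1)^i\alpha_i L_{-i}\big)\vv$. The key observation is that the numbers $(-1)^i\alpha_i$ are exactly the parameters of the reflected map $\check\phi(z)=-\phi(-z)=1-e^{-z}$, since each term of the vector field $\sum_i\alpha_i z^{i+1}\partial_z$ acquires a factor $(-1)^i$ under $z\mapsto -z$. Hence $\theta\lambda=\check T^{\dagger}\vv$, where $\check T$ implements $\check\phi$. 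I would then use that $\check\phi=M\circ\phi$ for the M\"obius map $M(w)=w/(1+w)$, which is generated by $L_1$ alone; translating this composition into the corresponding operator identity and taking adjoints (recall $L_1^{\dagger}=L_{-1}$) yields $\check T^{\dagger}=T^{\dagger}\exp(cL_{-1})$ for some constant $c$. Since $L_{-1}\vv=0$ this gives $\theta\lambda=\check T^{\dagger}\vv=T^{\dagger}\vv=\lambda$, forcing $\lambda^{(n)}=0$ for odd $n$. The cancellation is genuine rather than term-by-term: already at weight three the term $\alpha_3 L_{-3}\vv$ is exactly cancelled by the commutator contribution $\tfrac12\alpha_1\alpha_2 [L_{-1},L_{-2}]\vv$.

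For the factorization I would note that $\mathfrak h=\operatorname{span}\{L_{-k}:k\ge 2\}$ is an ideal in the Lie algebra $\operatorname{span}\{L_{-i}:i\ge 1\}$, with $\mathbb{C}L_{-1}$ a complementary subalgebra. Thus $T^{\dagger}=\exp\!\big(\sum_{i\ge 1}\alpha_i L_{-i}\big)$ factors as $P\exp(aL_{-1})$ with $P\in\exp(\mathfrak h)$, and since $L_{-1}\vv=0$ we obtain $\lambda=P\vv=\exp\!\big(\sum_{k\ge 2}c_k L_{-k}\big)\vv$. Because $[L_{-j},L_{-k}]=(k-j)L_{-j-k}$ strictly raises the weight, a standard triangular (disentangling) argument rewrites this single exponential as the descending ordered product $\cdots e^{\beta_4 L_{-4}}e^{\beta_2 L_{-2}}\vv$, and the parity result kills all odd $k$. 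Finally, since the only way to produce the length-one monomial $L_{-k}\vv$ in this ordered product is the linear term of the factor $e^{\beta_k L_{-k}}$ (products of lower even modes give longer Poincar\'e--Birkhoff--Witt monomials), one gets that $\beta_k$ equals the coefficient of $L_{-k}\vv$ in $\lambda^{(k)}$; evaluating $\big[\exp(\sum_i\alpha_i L_{-i})\vv\big]^{(k)}$ to the required order then reproduces the tabulated rationals.

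The main obstacle is the careful bookkeeping in the parity step: pinning down the precise operator form of the composition $\check\phi=M\circ\phi$, including the correct ordering and the constant $c$, so that the stray exponential appears as $\exp(cL_{-1})$ on the right where it annihilates the vacuum. Everything downstream is then either standard Lie-theoretic disentangling or a finite, mechanical computation of the $\alpha_i$ and the components $\lambda^{(k)}$.
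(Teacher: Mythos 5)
Your proposal is correct, but it takes a genuinely different route from the paper. The paper works at the level of the conformal map itself: it factorizes $\phi(z)=e^{z}-1$ as an ordered composition of one-parameter flows $w_k(z)=z\left(1-k\beta_k z^k\right)^{-1/k}$, obtains parity for free because $u=w_1^{-1}\circ\phi=2\tanh(z/2)$ is an odd function (so $\beta_{2k+1}=0$ for $k\ge 1$), then transports the factorization to the operator $T$ and takes adjoints, with $L_{-1}\vv=0$ removing the $\beta_1$ factor; the tabulated $\beta_k$ come out of straightforward power-series composition. You instead prove parity abstractly via conjugation by $(-1)^{L_0}$ together with the M\"obius identity $1-e^{-z}=\phi(z)/(1+\phi(z))$ --- equivalent in content to the paper's $\tanh$ computation, since writing $\phi=w_1\circ u$ turns your identity into the oddness of $u$ --- and you obtain the ordered product by Lie-theoretic disentangling in the weight-graded pro-nilpotent algebra spanned by the $L_{-k}$, using that $\mathrm{span}\{L_{-k}:k\ge 2\}$ is an ideal with complement $\mathbb{C}L_{-1}$. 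The one step you flag as delicate resolves cleanly: since $[L_{-i},L_{-j}]=(j-i)L_{-i-j}$ for $i,j\ge 1$ produces no central terms, $z^{i+1}\der{z}\mapsto L_{-i}$ is a Lie algebra homomorphism, and because the substitution operator implementing the outermost composition factor is the one adjacent to the argument, one finds $\exp\bigl(\sum_i(-1)^i\alpha_iL_{-i}\bigr)=\exp\bigl(\sum_i\alpha_iL_{-i}\bigr)\exp(-L_{-1})$, i.e.\ $c=-1$; the paper leans on exactly the same transport when it asserts $T=T_{w_1}T_{w_2}T_{w_4}\cdots$. Your route cleanly separates structure (parity, existence and uniqueness of the ordered product) from computation, and your observation that $\beta_k$ is the PBW coefficient of $L_{-k}\vv$ in $\lambda^{(k)}$ is correct --- for instance $\beta_2=\alpha_2=-\tfrac{1}{12}$ and $\beta_4=\alpha_4+\alpha_1\alpha_3+\tfrac{1}{3}\alpha_1^2\alpha_2=-\tfrac{1}{180}+\tfrac{1}{96}-\tfrac{1}{144}=-\tfrac{1}{480}$, matching the table --- but producing the table this way is heavier, requiring the $\alpha_i$ through $i=14$ and repeated normal ordering in the enveloping algebra, whereas the paper reads every $\beta_k$ off iterated series composition starting from $u=2\tanh(z/2)$. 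Two small points to make explicit if you write this up: extracting $\beta_k$ from coefficients presumes linear independence of the PBW monomials applied to $\vv$, so the identities should be established universally (in the completed enveloping algebra of $\{L_{-i}\}_{i\ge 1}$, or at generic central charge) and then specialized to any $V$ --- a convention the paper also adopts implicitly; and your claim that parity ``kills all odd $k$'' in the ordered product needs the short induction that the lowest surviving odd $\beta_k$ would contribute $\beta_kL_{-k}\vv\neq 0$ to $\lambda^{(k)}$, contradicting $\lambda^{(k)}=0$.
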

Thus we find
\begin{align*}
\lambda^{(0)} &= \vv,\qquad \lambda^{(2)} = \beta_2 L_{-2}\vv= -\frac{1}{12}L_{-2}\vv,\\
\lambda^{(4)} &=
\frac{1}{2!}\beta_2^2 L^2_{-2}\vv +\beta_4 L_{-4}\vv=
 \frac{1}{288}L^2_{-2}\vv - \frac{1}{480}L_{-4}\vv,\\
\lambda^{(6)} &
= \frac{1}{3!}\beta_2^3 L^{3}_{-2}\vv 
+ \beta_2\beta_4 L_{-4}L_{-2}\vv 
+ \beta_6 L_{-6}\vv\\
&=
 -\frac{1}{10368}L^{3}_{-2}\vv + \frac{1}{5760}L_{-4}L_{-2}\vv + \frac{1}{12096}L_{-6}\vv.
\end{align*}
Note that $\lambda^{(2n)}=\frac{\beta_2^n}{n!}L_{-2}^n\vv+\ldots$ for all $n\ge 0$. 
We also remark that the $\beta_k$ parameters are very similar to the Bernoulli numbers $B_k/k!$ but the precise relationship between these numbers is not known to us.
\medskip

In order to prove Proposition~\ref{prop:lambda} we consider the exponential mapping $\phi(z)$ as a sequence of conformal maps generated by $z^{k+1}\der{z}$ for each $k\ge 1 $.  Defining
\begin{equation}
\label{wk}
w_k(z) = z\left(1-k\beta_k z^k\right)^{-\frac{1}{k}},
\end{equation}
for a complex parameter $\beta_k$ we find
\begin{lemma}\label{g2_expmap} 
For any power series $f(z)=\sum_{m\ge 0}a_m z^m$ we find
\begin{equation*}
\exp\left(\beta_k z^{k+1}\der{z}\right) f(z)=f(w_k(z)).
\end{equation*}
\end{lemma}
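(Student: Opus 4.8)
The statement is precisely that $\exp\bigl(\beta_k z^{k+1}\der{z}\bigr)$ implements the time-$\beta_k$ flow of the vector field $z^{k+1}\der{z}$, acting on functions by the substitution $z\mapsto w_k(z)$. The plan is to promote $\beta_k$ to a formal time variable $t$, show that both sides, regarded as functions of $t$, satisfy one and the same first-order differential equation in $t$ with the same value at $t=0$, and then invoke uniqueness on the ring of formal power series in $z$.

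First I would check that the left-hand side is well defined and identify the equation it solves. Since $k\ge 1$, the operator $X:=z^{k+1}\der{z}$ strictly raises the degree in $z$ by $k$, so $X^n z^m$ is a single monomial of degree $m+nk$; hence in $\exp(tX)f=\sum_{n\ge 0}\tfrac{t^n}{n!}X^n f$ only finitely many $n$ contribute to any fixed power of $z$, and $F(t,z):=\exp(tX)f(z)$ is a well-defined formal power series whose coefficients are polynomials in $t$. Differentiating the defining series term by term gives $\der{t}F=X F=z^{k+1}\der{z}F$, together with $F(0,z)=f(z)$.

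Next I would treat the right-hand side. Writing $\zeta(t,z):=z(1-ktz^k)^{-1/k}$, so that $w_k=\zeta(t,z)|_{t=\beta_k}$ and $\zeta(0,z)=z$, a direct computation — differentiating $\zeta$ in $t$ and in $z$ and collecting terms via $1-ktz^k+ktz^k=1$ — shows that $\zeta$ itself solves $\der{t}\zeta=z^{k+1}\der{z}\zeta$. The chain rule then propagates this to the composite: $\der{t}f(\zeta)=f'(\zeta)\der{t}\zeta=z^{k+1}f'(\zeta)\der{z}\zeta=z^{k+1}\der{z}\bigl(f(\zeta)\bigr)$, with initial value $f(\zeta(0,z))=f(z)$. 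Thus $F(t,z)$ and $f(\zeta(t,z))$ satisfy the same transport equation with the same initial datum. Writing $F=\sum_M c_M(t)z^M$, the equation reads $c_M'(t)=(M-k)\,c_{M-k}(t)$ with $c_M(0)=a_M$, a triangular system that determines each $c_M(t)$ uniquely (and as a polynomial in $t$); the same holds for $f(\zeta)$. Hence the two series coincide for all $t$, and setting $t=\beta_k$ yields $\exp\bigl(\beta_k z^{k+1}\der{z}\bigr)f(z)=f(w_k(z))$.

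The only delicate point is this well-definedness and uniqueness on formal power series, and it is handled entirely by the observation that $X$ raises degree by $k\ge 1$, which decouples the recursion for the coefficients. Conceptually the formula is transparent: the substitution $y=-\tfrac1k z^{-k}$ turns $X=z^{k+1}\der{z}$ into the translation generator $\der{y}$, so $\exp(tX)$ acts by $y\mapsto y+t$, and solving $-\tfrac1k (w_k)^{-k}=-\tfrac1k z^{-k}+t$ for $w_k$ recovers \eqref{wk}. Since this change of variable involves negative powers of $z$, I would use it only as motivation and retain the transport-equation/uniqueness argument as the rigorous backbone.
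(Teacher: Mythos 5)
Your proof is correct, but it takes a genuinely different route from the paper. The paper's proof is a direct summation of the exponential series: it establishes the closed-form identity $\frac{1}{n!}\left(z^{k+1}\der{z}\right)^n z^m = z^m \binom{-m/k}{n}\left(-kz^k\right)^n$ and then recognizes $\sum_{n\ge 0}\binom{-m/k}{n}\left(-k\beta_k z^k\right)^n$ as the generalized binomial expansion of $\left(1-k\beta_k z^k\right)^{-m/k}$, giving $\exp\left(\beta_k z^{k+1}\der{z}\right)z^m = \left(w_k(z)\right)^m$ monomial by monomial, with linearity doing the rest. You instead characterize both sides as the unique solution of the transport equation $\partial_t F = z^{k+1}\der{z}F$ with initial datum $f$, verifying by direct differentiation that $\zeta(t,z)=z\left(1-ktz^k\right)^{-1/k}$ flows correctly and reducing uniqueness to the triangular coefficient recursion $c_M'(t)=(M-k)\,c_{M-k}(t)$. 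Your computation of $\partial_t\zeta$ and $\partial_z\zeta$ checks out (both equal $z^{k+1}\left(1-ktz^k\right)^{-1/k-1}$ up to the factor $z^{k+1}$), and your attention to formal well-definedness — that $X=z^{k+1}\der{z}$ raises degree by $k\ge 1$, so each coefficient of $\exp(tX)f$ is a polynomial in $t$ — is a point the paper leaves implicit. The trade-off: the paper's argument is shorter and fully explicit term by term, but requires guessing and verifying the binomial identity; your flow argument avoids any closed-form iteration, is conceptually transparent (it is exactly the statement that $\exp(tX)$ implements the flow of the vector field $X$, as your rectifying change of variable $y=-\tfrac{1}{k}z^{-k}$ makes vivid), and would generalize to other degree-raising vector fields where no such closed formula is available. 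You were also right to relegate the $y$-substitution to motivation, since it leaves the ring of formal power series in $z$.
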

\begin{proof}
 A straightforward calculation shows that
\begin{align*}
\frac{1}{n!}\left(z^{k+1}\der{z}\right)^n \left(z^m\right)= z^m \binom{-\frac{m}{k}}{n}\left(-k z^k\right)^n,
\end{align*}
so that 
\begin{equation*}
\exp\left(\beta_k z^{k+1}\der{z}\right) z^m=\left(w_k(z)\right)^m.
\end{equation*} 
\end{proof}
We next consider the composition of conformal maps generated by $z^{k+1}\der{z}$ for all $k\ge 1$ that is equivalent to the exponential map. 
\begin{proposition}
\label{g2_expmapprop}
The exponential map $\phi(z)=e^z-1$ is equivalent to a sequence of conformal maps 
\begin{align*}
\phi(z) = \ldots \exp\left(\beta_3 z^{4}\der{z}\right) \exp\left(\beta_2 z^{3}\der{z}\right) 
\exp\left(\beta_1 z^{2}\der{z}\right) 
z,
\end{align*}
for specific rational parameters where $\beta_{2k+1}=0$ for all $k\ge 1$ and $\beta_1=\frac{1}{2}$, $\beta_2=-\frac{1}{12}$, $\beta_4=-\frac{1}{480},\beta_6=\frac{1}{12096},\ldots$
\end{proposition}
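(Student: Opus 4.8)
The plan is to convert the infinite product of operator exponentials into an explicit composition of substitution maps, to pin down the parameters $\beta_k$ by matching power series coefficients, and then to expose a hidden oddness that kills the odd-index parameters. By Lemma~\ref{g2_expmap} each factor acts on a formal power series as the substitution $z\mapsto w_k(z)$, with $w_k(z)=z(1-k\beta_k z^k)^{-1/k}=z+\beta_k z^{k+1}+O(z^{2k+1})$, so that
$$\ldots\exp(\beta_3 z^{4}\der{z})\exp(\beta_2 z^{3}\der{z})\exp(\beta_1 z^{2}\der{z})\,z = w_1\circ w_2\circ w_3\circ\cdots,$$
where $w_k$ occupies the $k$-th slot counted from the outside. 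Since $w_k(z)=z+O(z^{k+1})$, composing on the inside with $w_k$ alters a series only in degrees $\ge k+1$; hence every coefficient of the composite depends on finitely many factors, and the composite is a well-defined formal series tangent to the identity.

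First I would fix the $\beta_k$ recursively. In the coefficient of $z^{k+1}$ in $w_1\circ\cdots\circ w_k$, the innermost factor $w_k$ contributes its leading term $\beta_k z^{k+1}$ linearly, so this coefficient equals $\beta_k$ plus a polynomial in the already-determined $\beta_1,\ldots,\beta_{k-1}$. Equating it with the coefficient $\tfrac{1}{(k+1)!}$ of $z^{k+1}$ in $e^z-1$ determines $\beta_k$ uniquely. In particular the degree-two comparison gives $\beta_1=\tfrac12$, and continuing yields $\beta_2=-\tfrac1{12}$, $\beta_4=-\tfrac1{480}$, $\beta_6=\tfrac1{12096}$, and so on. This step is entirely mechanical power-series bookkeeping.

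The substantive point is the vanishing $\beta_{2k+1}=0$ for $k\ge1$, and the observation that drives it is that peeling off the first factor produces an odd function. With $\beta_1=\tfrac12$ one has $w_1(z)=z/(1-z/2)$, whose inverse is $w_1^{-1}(u)=2u/(2+u)$, so that once Step two has established $w_1\circ w_2\circ\cdots=e^z-1$ we obtain
$$w_2\circ w_3\circ\cdots(z)=w_1^{-1}(e^z-1)=\frac{2(e^z-1)}{e^z+1}=2\tanh(z/2),$$
an \emph{odd} function of $z$. I would then induct on odd $k\ge3$: assuming $\beta_3=\cdots=\beta_{k-2}=0$, the prefix $w_2\circ\cdots\circ w_{k-1}$ is a composition of even-index maps, each odd (because $w_k$ involves only powers of $z$ congruent to $1$ modulo $k$, all odd when $k$ is even), hence the prefix is odd. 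For odd $k$ the correction $\beta_k z^{k+1}$ sits in the \emph{even} degree $k+1$, and using $w_k(z)=z+\beta_k z^{k+1}+O(z^{2k+1})$ together with the oddness of the prefix, the coefficient of $z^{k+1}$ in the whole composite collapses to exactly $\beta_k$, the remaining inner factors $w_{k+1},w_{k+2},\ldots$ touching only higher degrees. Since $2\tanh(z/2)$ has no even-degree terms, this forces $\beta_k=0$.

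I expect the main obstacle to be precisely this oddness argument, since the order-by-order determination of the $\beta_k$ is routine but it is not a priori clear why infinitely many vanish. The crux is recognising that $w_1^{-1}\circ\phi$ collapses to $2\tanh(z/2)$ and that oddness is preserved exactly by the even-index generators $z^{k+1}\der{z}$ (those for which $z^{k+1}$ is an odd function), so the unique factorisation of an odd map can recruit no odd-index factor. The only delicate point is verifying that, under the inductive hypothesis, the coefficient of $z^{k+1}$ in the composite reduces cleanly to $\beta_k$; this is where the sparse structure of $w_k$ (powers $1,k+1,2k+1,\ldots$) is essential.
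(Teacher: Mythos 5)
Your proposal is correct and takes essentially the same route as the paper: the decomposition $\phi = w_1\circ w_2\circ w_3\circ\cdots$ via Lemma~\ref{g2_expmap}, recursive coefficient matching to fix the $\beta_k$, and the identification $w_1^{-1}\circ\phi(z)=2\tanh(z/2)$ whose oddness forces $\beta_{2k+1}=0$. Your explicit induction showing the prefix of even-index maps is odd and that the coefficient of $z^{k+1}$ collapses to $\beta_k$ merely spells out a step the paper's proof leaves implicit.
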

\begin{proof}
Repeatedly applying Lemma~\ref{g2_expmap} we find 
\begin{align*}
\phi(z) =w_1\left(w_2\left(w_3\left(\ldots z\ldots \right)\right)\right).
\end{align*}
Since 
\begin{equation*}
w_k(z)=z+\beta_k z^{k+1}+O(z^{2k+1}),
\end{equation*}
 we can iteratively solve for $\beta_k$. Thus $\phi(z)=w_1(z)+O(z^3)$ which implies $\beta_1=\frac{1}{2}$.
This process can be repeated to obtain each $\beta_{k}$. To solve for $\beta_2$ let
\begin{align}
u(z) &= w_2\left(w_3\left(w_4\left(\ldots z\ldots \right)\right)\right)=z+ \beta_2z^3 + O(z^4) \label{g2_uzexpans}.
\end{align}
But $ \phi(z) =w_1(u(z))=u(z)\left(1-\frac{1}{2}u(z)\right)^{-1}$ implies  
\begin{align}
u(z) &= 2\tanh\left(\frac{z}{2}\right)=z-\frac{1}{12}z^3 + O(z^4) \label{g2_uzistanh}.
\end{align}
Comparing to \eqref{g2_uzexpans} we thus obtain $\beta_2=-\frac{1}{12}$. 
Notice that  $u(z)$ is an odd function of $z$. But 
$w_{2k+1}(z) = z + \beta_{2k+1}z^{2k+2} + \ldots$
contributes an even power $z^{2k+2}$ to $u(z)$ in \eqref{g2_uzexpans} 
for all $k\ge 1$ so that $\beta_{2k+1}=0$. The even labeled parameters $\beta_4,\beta_6, \ldots$ can be computed by considering  the higher order terms in the expansion of $\phi(z)$.  
\end{proof}

We may now complete the proof of Proposition~\ref{prop:lambda}. Associated with each conformal map $w_k$ in \eqref{wk}, we define the linear operator
\begin{equation*}
T_k= \exp(\beta_k L_k).
\end{equation*}
Then  Proposition~\eqref{g2_expmapprop} implies that the linear operator $T$ associated with the exponential map $\phi$ can be written as
\begin{align*}
T &= T_{w_1}T_{w_2}T_{w_4}T_{w_6}\ldots= \exp\left(\beta_1 L_1\right)\exp\left(\beta_2 L_2\right)\exp\left(\beta_4 L_4\right)\exp\left(\beta_6 L_6\right)\ldots,
\end{align*}
for $\beta_k$ of  Proposition~\eqref{g2_expmapprop}. Furthermore,  the adjoint operator is 
\begin{equation*}
T^\dagger = \ldots \exp\left(\beta_6 L_{-6}\right)\exp\left(\beta_4 L_{-4}\right)\exp\left(\beta_2 L_{-2}\right)\exp\left(\beta_1 L_{-1}\right),
\end{equation*}
so that 
\begin{equation*}
\lambda =T^\dagger\vv=\ldots \exp\left(\beta_6 L_{-6}\right)\exp(\beta_4L_{-4})\exp(\beta_2L_{-2})\vv,
\end{equation*}
using $L_{-1}\vv=0$. \qed

\end{document}